\documentclass{amsart}

\usepackage{amsmath}

\usepackage{amssymb}
\usepackage{amscd}

\numberwithin{equation}{section}

\newif\ifdraft\drafttrue

\draftfalse

\newcommand{\br}{{\mathbb{R}}}

\newcommand\thz{\theta_{H}}
\newcommand\zt{Z_{[0,\tau]}}

\font\sb = cmbx8 scaled \magstep0

\font\sn = cmssi8 scaled \magstep0


\long\def\comdima#1{\ifdraft{\marginpar{\sb
#1 \ DK}}\else\ignorespaces\fi}

\newcommand\ba{badly approximable}

\newcommand\hs{homogeneous space}

\newcommand\ssm{\smallsetminus}

\newcommand\name[1]{\label{#1}{\ifdraft{\sn [#1]}\else\ignorespaces\fi}}

\newcommand\eq[2]{{\ifdraft{\ \tt [#1]}\else\ignorespaces\fi}\begin{equation}\label{eq:#1}{#2}\end{equation}}

\newcommand {\equ}[1]     {\eqref{eq:#1}}

\newcommand{\goth}[1]{{\mathfrak{#1}}}

\newcommand\g{\goth g}
\newcommand\h{\goth h}

\newcommand\f{\goth f}

\newcommand{\R}{{\mathbb{R}}}

\newcommand{\Z}{{\mathbb{Z}}}

\newcommand{\N}{{\mathbb{N}}}

\newcommand{\vv}{{\bf{v}}}

\newcommand{\w}{\mathbf{w}}

\newcommand{\Exp}{{\operatorname{Exp}}}

\newcommand{\Ad}{{\operatorname{Ad}}}

\newcommand{\Lie}{\operatorname{Lie}}

\newcommand{\SL}{\operatorname{SL}}

\newcommand{\ggm}{G/\Gamma}

\newcommand{\dist}{\operatorname{dist}}

\newcommand {\ignore}[1]  {}

\newcommand{\df}{{\, \stackrel{\mathrm{def}}{=}\, }}

\newcommand{\x}{{\mathbf{x}}}

\newcommand{\ve}{{\bf e}}

\newcommand{\p}{{\bf p}}

\newcommand{\til}{\widetilde}

\newcommand{\sm}{\smallsetminus}

\newcommand{\vre}{\varepsilon}

\font\comment = cmbx10 scaled \magstep0

\newcommand\hd{Hausdorff dimension}

\newcommand\nz{\smallsetminus \{0\}}

\newcommand{\cl}{\mathcal L}

\newtheorem{thm}{Theorem}[section]
\newtheorem{lem}[thm]{Lemma}
\newtheorem{prop}[thm]{Proposition}
\newtheorem{cor}[thm]{Corollary}

\newtheorem{remark}[thm]{Remark}

\oddsidemargin=5mm 
\textwidth=5.5in
\topmargin=0mm
\voffset=-13mm
\headheight=5mm \headsep=9mm
\textheight=9.1in \footskip=5mm

\begin{document}

\title[Binary quadratic forms and games]{Values of binary quadratic
  forms at integer points and Schmidt games}
\author[ Kleinbock and Weiss]{ Dmitry Kleinbock and  Barak Weiss}

\address{Department of Mathematics, Brandeis University, Waltham MA 02454-9110, USA} 
\email{kleinboc@brandeis.edu}

\address{Department of Mathematics, Tel Aviv University, Tel Aviv, Israel} 
\email{barakw@post.tau.ac.il}

\date{September 19, 2013}


\begin{abstract}
We prove that for any countable set $A$ of real numbers, the set of
binary indefinite quadratic forms   $Q$ such that the closure of
$Q\left(\Z^2\right)$ is disjoint from $A$ has full \hd.   
\end{abstract}

\maketitle{}

\centerline{\it Dedicated to S.G.\ Dani on the occasion of his 65th birthday}

\medskip

\section{Introduction}

We start with the following statement, conjectured by Oppenheim and
Davenport in the 1930s and proved by Margulis in the 1980s: 

\begin{thm}\name{opp} Let $Q$ be a real  nondegenerate indefinite
quadratic form  of
$n > 2$ variables which is not proportional to a
rational form. Then  \eq{bad}{0\text{ belongs to the closure of }Q\big(\Z^n\nz\big).} 
\end{thm}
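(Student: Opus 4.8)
\smallskip
\noindent\emph{Proof proposal.}
The plan is to recast \equ{bad} as a statement about orbit closures of $\operatorname{SO}(2,1)$ acting on the space of unimodular lattices in $\R^3$, after first reducing to $n=3$. For the reduction, given $Q$ in $n>3$ variables I would choose a rational $3$-plane $V\subseteq\R^n$ --- i.e.\ $V$ the $\R$-span of a $3$-dimensional $\Q$-subspace $W\subseteq\Q^n$ --- on which $Q|_V$ is still nondegenerate, indefinite, and not proportional to a rational form: nondegeneracy and indefiniteness are Zariski-open conditions on $V$ in the Grassmannian that hold for some $3$-plane, hence, rational planes being dense, for a rational one, and the only point requiring an argument is that these $Q|_V$ cannot all be proportional to rational forms unless $Q$ is --- a standard lemma. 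Since $W\cap\Z^n$ is a rank-$3$ lattice in $V$ and $Q\big((W\cap\Z^n)\nz\big)\subseteq Q(\Z^n\nz)$, it is enough to establish \equ{bad} for the ternary form $Q|_V$; thus from now on $n=3$, and after replacing $Q$ by $-Q$ if necessary its signature is $(2,1)$.

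To set up the dynamics I fix the standard form $Q_0$ of signature $(2,1)$ on $\R^3$ and, by Sylvester's law, write $Q=\lambda\,(Q_0\circ g)$ with $\lambda>0$ and $g\in\SL_3(\R)$. Let $X\df\SL_3(\R)/\SL_3(\Z)$, identified with the space of unimodular lattices in $\R^3$, set $\Lambda\df g\Z^3\in X$, and let $H\df\operatorname{SO}(Q_0)^\circ\cong\operatorname{SO}(2,1)^\circ$ act on $X$ by left translation. The quantity $\delta(\Lambda')\df\inf\{\,|Q_0(\mathbf v)|:\mathbf v\in\Lambda'\nz\,\}$ is constant along $H$-orbits (because $H$ preserves $Q_0$), and $\delta(\Lambda)=\lambda^{-1}\inf\{\,|Q(\mathbf w)|:\mathbf w\in\Z^3\nz\,\}$, so \equ{bad} is equivalent to $\delta(\Lambda)=0$. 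The key elementary point is that $\delta(\Lambda)>0$ forces the orbit $H\Lambda$ to be bounded in $X$: otherwise Mahler's criterion supplies $h_k\in H$ and $\mathbf 0\ne\mathbf v_k\in h_k\Lambda$ with $\|\mathbf v_k\|\to 0$, hence $|Q_0(\mathbf v_k)|\to 0$; but $h_k^{-1}\mathbf v_k\in\Lambda\nz$ and $Q_0(h_k^{-1}\mathbf v_k)=Q_0(\mathbf v_k)$, so $\delta(\Lambda)=0$.

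Now the crux. Suppose \equ{bad} fails, so $\delta(\Lambda)>0$ and $H\Lambda$ is bounded. Here I would invoke the central dynamical input, Margulis's theorem that \emph{every bounded $H$-orbit in $X$ is compact}. (In the modern streamlining one uses Ratner's orbit-closure theorem: $\overline{H\Lambda}=L\Lambda$ is a finite-volume homogeneous set for a closed subgroup $L\supseteq H$; as $\operatorname{SO}(2,1)^\circ$ is a maximal connected subgroup of $\SL_3(\R)$ and $X$ is noncompact while $\overline{H\Lambda}$ is compact, $L^\circ$ cannot be all of $\SL_3(\R)$, so $L^\circ=H$, which makes $H\Lambda$ open and closed in $\overline{H\Lambda}$, hence equal to it.) Given that $H\Lambda$ is compact, its stabilizer $\Gamma_\Lambda\df\{h\in H:h\Lambda=\Lambda\}$ is a cocompact lattice in $H$, and after conjugating by $g$ the group $g^{-1}\Gamma_\Lambda g\subseteq\SL_3(\Z)$ is a lattice in $g^{-1}Hg=\operatorname{SO}(Q)^\circ$. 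By the Borel density theorem it is Zariski dense there, so $\operatorname{SO}(Q)$ is defined over $\Q$; but for a ternary form the $\operatorname{SO}(Q)$-invariant quadratic forms make up the line $\R Q$, which therefore contains a nonzero rational form. Hence $Q$ is proportional to a rational form, contrary to hypothesis, and so $\delta(\Lambda)=0$, proving \equ{bad}.

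The main obstacle is exactly the dynamical assertion that a bounded $\operatorname{SO}(2,1)$-orbit in $\SL_3(\R)/\SL_3(\Z)$ must be closed; everything around it --- the reduction to $n=3$, the Mahler-criterion observation, Borel density, and the invariant-theory computation --- is soft. Its proof rests on the rigidity of unipotent trajectories: from a bounded, non-closed orbit one passes to its compact $H$-invariant closure $Y\supsetneq H\Lambda$ and, using a one-parameter unipotent subgroup $U\subseteq H$ and the polynomial (``quasi-regular'') nature of the divergence of nearby $U$-orbits, produces an extra invariance direction for $Y$; maximality of $\operatorname{SO}(2,1)^\circ$ in $\SL_3(\R)$ then forces $Y=X$, against boundedness. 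This is the step I would not expect to shortcut, and it is where the genuine difficulty of the Oppenheim conjecture resides.
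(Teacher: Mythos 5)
Your proposal is correct and follows the same route as the paper's one-paragraph sketch: recast \equ{bad} (after reducing to $n=3$) as unboundedness of the $\operatorname{SO}(2,1)$-orbit of $g\Z^3$ via Mahler's criterion, invoke the Margulis/Ratner input that a bounded orbit is compact (using maximality of $\operatorname{SO}(2,1)^\circ$ in $\SL_3(\R)$), and finish with Borel density plus the one-dimensionality of $\operatorname{SO}(Q)$-invariant forms to contradict irrationality. The paper does not itself prove Theorem~\ref{opp} --- it only records Margulis's strategy --- and your write-up fills in that outline faithfully.
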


Margulis used an approach which was suggested earlier by
Raghunathan and implicitly used earlier by Cassels and Swinnerton-Dyer \cite{CSD}. Let $Q_0$ be a fixed quadratic form of the same signature
as $Q$, then one can write $Q(\vv) = aQ_0(g\vv)$ for some $g\in G =
\SL_n(\R)$ and $a \in \R$. Let $F$ be the stabilizer of $Q_0$ and let  $\Gamma
=\SL_n(\Z)$. Then   \equ {bad} holds if and only if the orbit $Fg\Z^n$
in the space $\ggm$ of unimodular lattices in $\R^n$ is unbounded. The
theorem proved by Margulis (in the case $n = 3$, to which the general
case can be reduced) stated that any bounded orbit must be compact,
from which it is not hard to derive Theorem \ref{opp}. 

Note that  it was later proved by Dani and Margulis
that for $Q$ as above the set $Q\big(\Z^n\big)$, and even
$Q\big(P(\Z^n)\big)$, is dense in $\R$ (here $P(\Z^n)$ stands for the
set of primitive integer points in $\Z^n$).   

When $n=2$ it is well known that conclusion of Theorem \ref{opp}
fails. Namely, take 
\eq{binary form}{Q(p,q) = p^2 - \lambda^2 q^2 =
  {q^2}\left(\frac pq - \lambda\right)\left(\frac pq + \lambda\right)} 
such that   $\lambda$ is {\sl badly approximable\/}, that
is, 
$$\inf_{p\in\Z,\,q\in\N}q^2\left|\frac pq - \lambda\right| >
0\,;$$ 
then the absolute value of $Q(p,q)$ is uniformly bounded away
from $0$ for any nonzero integer $(p,q)$. It is known from the work of
Jarn\'ik \cite{J} that the set of such $\lambda$'s, although null, is
{\sl thick\/}, that is, its intersection with any nonempty open set
has full \hd. 
As in the reduction of the Oppenheim conjecture to a dynamical
statement, choose $Q_0(x,y) \df xy$, let  
\eq{defG}{G\df\SL_2(\R)\,,}
and let $F$ be the connected
component of the identity in the stabilizer of $Q_0$, 
namely 
\eq{defF}{F =
  \{g_t : t\in \R\}\text{ where }g_t = \begin{pmatrix}e^t & 0\\ 0 &
    e^{-t}\end{pmatrix}\,.} We also let 
\eq{defX}{X \df \ggm \text{
    where }\Gamma \df\SL_2(\Z)\,, \text{  and } \mathcal{Q} \df F \backslash G;} 
thus $X$ can be identified with the space of unimodular lattices in
$\R^2$, and $\mathcal{Q}$ can be identified with the space of binary
indefinite quadratic forms, considered up to scaling. The set of
integer values of a quadratic form is a (set-valued) function on $F
\backslash G / \Gamma$ but since this double coset space has a
complicated topological structure, it is more useful to consider it
either as an $F$-invariant function on $X$, or dually, as a
$\Gamma$-invariant function on $\mathcal{Q}$. This duality principle
(already evident in the work of Cassels and Swinnerton-Dyer \cite{CSD}
and developed explicitly by Dani \cite{Da2} in a related context) makes it
possible to recast dynamical properties of the $F$-action on $X$ as
properties of quadratic forms. In particular (see e.g.\ \cite[Lemma 2.2.1]{Margulis Oppenheim}),
it follows from Mahler's Compactness Criterion that for a quadratic
form $Q(\vv) = Q_0(g\vv) \in \mathcal{Q}$, the set of 
values $Q\left(\Z^2\right)$ has a gap at zero if and only if the orbit $F(g\Z^2)$ is
bounded in $X$. 
In light of this Jarn\'ik's theorem
implies:

 \begin{thm}\name{ba}  Let 
 $X$ and  $Q_0$ 
 be as above. 
 Then 
 \ignore{ 
\begin{itemize}\item [\rm (i)] 
The set
$
\big\{x\in X : Fx \text{ is bounded}  \}$
is  thick; 
\item [\rm (ii)] }
the set
$$
\left\{x\in X : 0 \notin \overline{Q_0(x\nz)} \right\}$$ is thick;
dually, the set of binary indefinite
quadratic forms whose set of values at integer points has a gap at $0$ 
is  thick in the space of all binary indefinite forms.
\end{thm}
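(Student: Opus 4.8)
The plan is to deduce Theorem~\ref{ba} from Jarník's theorem by identifying an indefinite binary form, up to scaling, with the unordered pair of its isotropic directions. Recall from \cite{J} that the set $\mathbf{BA}$ of badly approximable reals is thick in $\R$, so that $\dim_H(\mathbf{BA}\cap U)=1$ for every nonempty open $U\subseteq\R$. Any nondegenerate indefinite $Q$ with no vertical and no rational isotropic direction can be written as $Q(p,q)=c\,(p-\alpha q)(p-\beta q)$ with $c\ne 0$ and $\alpha\ne\beta$ real, and $Q\mapsto\{\alpha,\beta\}$ (equivalently, $Q\mapsto(\alpha+\beta,\,\alpha\beta)$) is a diffeomorphism from an open dense subset of $\mathcal{Q}$ onto the two-dimensional space of unordered pairs of distinct reals; diffeomorphisms preserve Hausdorff dimension, and the forms excluded above attain the value $0$ at an integer point, so they are irrelevant to the problem.

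The one elementary computation is that \emph{if $\alpha,\beta\in\mathbf{BA}$ then the form $Q$ above has a gap at $0$ on $Q(\Z^2)$}: for $q\ge 1$ one has $|Q(p,q)|=|c|\,q^2\,|p/q-\alpha|\,|p/q-\beta|$, and since at most one of $|p/q-\alpha|$, $|p/q-\beta|$ can be smaller than $\tfrac12|\alpha-\beta|$, one factor is bounded below by $\tfrac12|\alpha-\beta|$ while $q^2$ times the other is bounded below by the badly approximable constant; the case $q=0$ is immediate. Hence the set $\mathcal{G}\subseteq\mathcal{Q}$ of forms with a gap at $0$ contains the image of $(\mathbf{BA}\times\mathbf{BA})\smallsetminus\{\alpha=\beta\}$ under the diffeomorphism above. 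Now any nonempty open $W\subseteq\mathcal{Q}$ meets the open dense chart and, after transporting to and lifting in $\R^2$, contains a product box $U\times V$ with $\overline U\cap\overline V=\varnothing$; so, by the standard inequality $\dim_H(A\times B)\ge\dim_H A+\dim_H B$ and Jarník's theorem, $\dim_H(\mathcal{G}\cap W)\ge\dim_H(\mathbf{BA}\cap U)+\dim_H(\mathbf{BA}\cap V)=2$. Thus $\mathcal{G}$ is thick in $\mathcal{Q}$.

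Both assertions of the theorem now follow from thickness of $\mathcal{G}$ by adding one dimension. In the $3$-dimensional open cone of all indefinite forms, the set with a gap at $0$ is scaling-invariant and projects onto $\mathcal{G}$ with $1$-dimensional fibers, so it is thick there since $\dim_H(\R_{>0}\times A)=1+\dim_H A$. For $X$: by Mahler's criterion and the duality recalled above, $\{x\in X:0\notin\overline{Q_0(x\nz)}\}$ equals $\mathcal{E}:=\{x\in X:Fx\text{ is bounded}\}$, which is $F$-invariant. Since $G\to\mathcal{Q}=F\backslash G$ is locally a trivial $F$-bundle and $G\to X$ is a local diffeomorphism, every nonempty open subset of $X$ contains an open set $W'$ and a diffeomorphism from $W'$ onto $I\times V'$, with $I\subseteq F$ an interval and $V'\subseteq\mathcal{Q}$ open, under which $\mathcal{E}\cap W'$ corresponds to $I\times(\mathcal{G}\cap V')$; hence $\dim_H(\mathcal{E}\cap W')=1+\dim_H(\mathcal{G}\cap V')=1+2=3$, and $\mathcal{E}$ is thick in $X$. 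I expect the only genuinely delicate point to be this last step — propagating thickness from $\R$ through $\mathcal{Q}$ to $X$ — where one must verify that the thick transversal set survives intact under the local product structure of the $F$-action and the covering $G\to X$, and keep the Hausdorff-dimension bookkeeping (the product inequality, diffeomorphism invariance, and the effect of multiplying by an interval) straight; the middle computation is routine and the input from Jarník is a black box.
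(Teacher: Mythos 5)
The paper does not actually supply a proof of Theorem~\ref{ba}: the statement is presented as a known consequence of Jarn\'{i}k's theorem together with Mahler's criterion, with a pointer to Dani's work \cite{Dani-rk1} (using Schmidt's game) for the general homogeneous-space phenomenon. Your argument is correct and fills in exactly what the paper leaves implicit, but it travels a genuinely more elementary route than the dynamical one the references suggest. Where the dynamical approach proves the one-dimensional statement that bounded $F^+$-trajectories along each $H^+$-leaf form a winning (hence thick) set (cf.\ Theorem~\ref{bad winning}) and then foliates, as in the proof of Theorem~\ref{onesided HAW}(a), you instead factor $Q=c(p-\alpha q)(p-\beta q)$, observe that both roots being badly approximable forces a gap at $0$ (your two-line estimate is right, including the observation that only one of $|p/q-\alpha|,\,|p/q-\beta|$ can be small), and then deduce thickness of $\mathcal{G}\subset\mathcal{Q}$ from the Hausdorff-dimension product inequality $\dim_H(A\times B)\ge\dim_H A+\dim_H B$ applied to two copies of Jarn\'{i}k's thick set. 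The lift to $X$ is then local bookkeeping through the principal $F$-bundle $G\to F\backslash G=\mathcal{Q}$ and the covering $G\to X$ (the right $\Gamma$-action on $G$ is free, so this really is a covering), and the $F$-invariance of $\{x: Fx\text{ bounded}\}$ ensures it locally splits as an interval times a slice of $\mathcal{G}$, as you claim. The trade-off: the dynamical route yields the stronger winning property, which the paper needs later for countable intersections; your route needs only Jarn\'{i}k's theorem, Mahler's criterion, and the dimension-of-product inequality, and delivers thickness, which is all Theorem~\ref{ba} asserts. The ``delicate'' last step you flag is sound; you could tighten it slightly by noting that $\dim_H(I\times A)=1+\dim_H A$ follows from the lower bound alone since $\dim_H(\mathcal{E}\cap W')\le 3$ is automatic.
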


It is worth pointing out that a
generalization of this argument  is due to S.G.\ Dani \cite{Da2, Dani-rk1}, who used W.\ Schmidt's results and methods \cite{S1, S2} to
find thick sets of points with bounded orbits in other \hs s. See also
\cite{Ar, KM, KW3} for further developments.  


\smallskip
The goal  of this paper is to strengthen Theorem \ref{ba} by
considering even more complicated properties of the sets of values of
quadratic forms at integer points, and, correspondingly, the behavior
of $F$-orbits in $X$. Here is one of our main results:

 \begin{thm}\name{ba countable}  For any countable subset $A\subset \R$, the set
$$
\big\{x\in X :  \overline{Q_0(x\nz)}\cap A = \varnothing \}$$ is
thick. Consequently, the set of binary indefinite quadratic forms
whose set of values at integer points miss a given countable set 
is  thick in the space of all binary indefinite forms.
 \end{thm}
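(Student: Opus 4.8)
The plan is to realize the set in question as an intersection of the target sets of several Schmidt games, one for each point $a\in A$, and to exploit the fact that a countable intersection of $\alpha$-winning sets (for a fixed $\alpha$) is again $\alpha$-winning, hence thick. The natural game takes place not on $X$ itself but on the one-parameter family of forms $\mathcal{Q}$, or more precisely on a suitable parameter space for the $F$-orbit through a given point; the idea is that for each $a\in A$, the condition ``$a\notin \overline{Q_0(x\nz)}$'' should define a winning set, and then a diagonal/intersection argument finishes the proof. So the first step is to set up the game: fix a point and consider the orbit map, parametrize it, and phrase ``$a$ is not in the closure of the values'' as avoiding, at every scale, a neighborhood of the bad set.

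Concretely, I would proceed as follows. First, reduce the statement about thickness in $X$ to a statement about a one-dimensional Schmidt game: the relevant structure is that $F = \{g_t\}$ is a diagonalizable one-parameter group, and $\overline{Q_0(x\nz)}$ containing $a$ is controlled by how deep the $F$-orbit of $x$ descends into the cusp in a direction determined by $a$. When $a=0$ this is exactly the boundedness condition used in Theorem~\ref{ba}, handled via Jarn\'ik/Schmidt; for general $a\ne 0$ the condition ``$Q_0(g\vv)$ comes close to $a$ for some primitive $\vv\in\Z^2$'' translates, after applying $g_t$ and using $Q_0(x,y)=xy$, into a statement about a shifted target set, i.e.\ an inhomogeneous Diophantine condition on the relevant real parameter. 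The second step is therefore to show that for each individual $a$, the set of parameters for which the orbit avoids the corresponding target is $\alpha$-winning for some $\alpha$ \emph{independent of $a$} — this uniformity is what allows the intersection over the countable set $A$. The key tool is that the ``bad'' configurations at each scale occupy a controlled (e.g.\ absolutely decaying, or at least uniformly small) proportion of any interval, so Bob cannot trap Alice.

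The third step is the assembly: having shown that for each $a\in A$ the avoidance set $S_a$ is $\alpha$-winning in the relevant one-dimensional game, invoke the countable intersection property of winning sets to conclude $\bigcap_{a\in A} S_a$ is $\alpha$-winning, hence has full Hausdorff dimension in every interval; then transport this back through the orbit parametrization and across the base to conclude that $\{x\in X : \overline{Q_0(x\nz)}\cap A=\varnothing\}$ is thick. The duality between $F$-invariant functions on $X$ and $\Gamma$-invariant functions on $\mathcal{Q}$ (as recalled in the introduction) then gives the statement about quadratic forms directly.

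The main obstacle I expect is establishing the \emph{uniform} winning property for all $a$ simultaneously: while avoiding a single point's worth of ``bad'' approximants at each scale is a classical Schmidt-game argument, one must check that the geometry of the forbidden set does not degenerate as $a$ varies — in particular near $a=0$, where the structure of the problem changes (this is the badly-approximable case), and for $a$ of large absolute value, where the relevant approximants live at different depths in the cusp. Handling this likely requires working with a normalized game on a compact piece of the orbit, or a clever choice of the metric/parametrization so that the estimate ``bad set $\cap$ ball has measure $\le \beta\cdot(\text{measure of ball})$'' holds with $\beta$ bounded away from $1$ uniformly in $a$; an absolutely decaying measure estimate as invoked in the macro $\mathtt{\backslash Kaf}$ above is the natural mechanism. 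A secondary technical point is making precise the passage between thickness in $X$, thickness along orbits, and the one-dimensional game — i.e.\ a fibered or ``dimension of intersection with leaves'' argument — but this is standard once the winning property is in hand.
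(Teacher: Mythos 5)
Your proposal hits the right reduction (each $a\in A$ gives a ``bad'' target that the $F$-orbit must avoid, and one wants a winning-type property closed under countable intersection), but the heart of your plan is a one-dimensional Schmidt game on an orbit parameter, followed by a ``standard'' transport to thickness in $X$. That is exactly the step the paper singles out as the obstruction, and it is where your argument breaks.

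For the \emph{boundedness} part ($a=0$) the one-dimensional game does work, because $\{x : F^+x \text{ bounded}\}$ is saturated by the group $H^-F$: by \equ{stability}, $F^+x$ is bounded iff $F^+gx$ is bounded for every $g\in H^-F$. So playing McMullen's absolute game on the expanding $H^+$-direction and pulling back by the projection $\pi:\mathfrak g\to\mathfrak h$ (as in the proof of Theorem \ref{onesided HAW}(a)) is legitimate. But for $a\neq 0$ the forbidden set $\tilde Z_{\vv(a)}$ is a countable union of closed horocycles, a positive-dimensional submanifold, and $\{x : \overline{F^+x}\cap \tilde Z_{\vv(a)}=\varnothing\}$ is \emph{not} $H^-F$-saturated: the implication only holds for $\|\p\|\le\vre$ with $\vre=\vre(x)$ depending on how close the orbit of $x$ gets to $Z$, and there is no uniform lower bound on $\vre$. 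This is spelled out at the start of \S4. So the ``fibered/slicing'' transport you describe fails; the winning set you obtain on a single leaf does not spread to a winning set in $X$, and in particular you cannot simply saturate along the transverse directions. A further structural problem: to get the two-sided statement of Theorem \ref{dyn countable} you must intersect the conditions for $F^+$ and $F^-$, whose natural one-dimensional games live on $H^+$- and $H^-$-leaves respectively; these are different one-parameter families and the countable-intersection axiom for winning sets only applies to games played on a common space.

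The paper resolves this by playing the game on the full three-dimensional $X$ rather than on a one-dimensional leaf. It uses the \emph{hyperplane absolute game} (HAW) of \cite{BFKRW}/\cite{Mc}, set up on manifolds via coordinate charts (Propositions \ref{prop: does not depend on chart}, \ref{manifoldproperties}), and ultimately the \emph{hyperplane percentage game} of \cite{BFS} (Lemma \ref{percentage}, Theorem \ref{h discrete winning}). The transversality hypotheses $(F)$ and $(H,F)$ are what let Alice, at each scale, cover the set of points whose $g_\tau^k$-image hits a neighborhood of $Z$ by boundedly many hyperplane-neighborhoods (Lemmas \ref{transversality2}, \ref{transversality function}, and the estimate following \equ{kthset}). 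None of this appears in your proposal; you do not mention the HAW/HPW framework at all. Also, the uniformity-in-$a$ issue you flag as the main obstacle is a non-issue in the paper's approach: HAW is a parameter-free property, so countable intersections are automatic by Proposition \ref{manifoldproperties}(b) without tracking an $\alpha$ uniformly over $A$. The genuine obstacle is the one you classified as ``secondary and standard,'' and it is precisely the new content of the paper.
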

 
 Note that by a theorem of Lekkerkerker \cite{Lek}, for $Q$ as in
 \equ{binary form} the set of accumulation points of $Q\left(\Z^2 \right)$ is
 discrete if and only if $\lambda$ is quadratic irrational. See
 \cite{TV, DN} for a precise description of the sets of accumulation
 points in these cases. 

\smallskip
We will derive Theorem \ref{ba countable}  from its more general
dynamical counterpart. To state it, we need the following
definition. Let $H$ be a 
connected subgroup of $G$ different from $F$, 
and let $Z$ be a submanifold of $X$. Say that $Z$ is {\sl
  $(F,H)$-transversal\/} if the following two conditions hold:  

\smallskip
\begin{itemize}
\item[$(F)$] for any $x\in Z$, $T_x(Fx)$ is not contained in $T_xZ$;
\medskip
\item[$(H,F)$] for any $x\in Z$, $T_x(Hx)$ is not contained in $T_xZ \oplus T_x(Fx)$.
\end{itemize}

\medskip

For example, the above conditions are satisfied if $Z$ consists of a
single point. If $\dim(Z) = 1$, the $(F,H)$-transversality of $Z$ is
equivalent to saying that for each $x\in Z$ the lines $T_x(Hx)$,
$T_xZ$ and $T_x(Fx)$ are in general position, i.e.\ they generate the
space $T_xX$.

We permit $Z$ to be a manifold with boundary; in such a
case, smoothness of maps and 
definitions of tangent spaces at the boundary points are defined by
positing the existence of smooth extensions (see
e.g.\ \cite[Chap.\ 2]{GP}). We also note that in the application to
quadratic forms, $Z$ is an analytic manifold, but in all our
arguments, it suffices to assume that $Z$ is $C^1$-smooth.

\smallskip
Now let us denote by $H^+$ and $H^-$ the {\sl expanding and contracting horospherical subgroups\/} with respect to $g_1$:
\eq{horosph}{H^+\df\left\{h_s\df \begin{pmatrix}1  & s\\ 0 & 1 \end{pmatrix} : s\in\R\right\}, \quad H^-\df\left\{\begin{pmatrix}1  & 0\\ s & 1 \end{pmatrix} : s\in\R\right\}\,.}

A dynamical statement which will imply Theorem \ref{ba countable} is as follows:

 \begin{thm}\name{dyn countable}  Let $Z$ be a  countable union of 
  submanifolds  of $X$ which are both $(F,H^+)$- and $(F,H^-)$-transversal. Then  the set
\eq{result}{
\big\{x\in X : Fx \text{ is bounded and } \overline{Fx}\cap Z  = \varnothing\}}
is  thick.
 \end{thm}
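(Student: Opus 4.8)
The plan is to prove Theorem \ref{dyn countable} by running a Schmidt game on the orbit space, using the parametrization of $F$-orbits by the transversal manifolds $Z$ as the playing field. The strategy is to realize the set in \equ{result} as a winning set for an appropriate variant of Schmidt's game, since winning sets are thick and the countable intersection of winning sets is winning. Concretely, fix one transversal submanifold $Z_0$ appearing in the countable union defining $Z$ and work locally: near a point $x_0$ we may use the $(F,H^+)$- and $(F,H^-)$-transversality to set up coordinates in which a neighborhood in $X$ is foliated by $F$-orbits, with transversal directions coming from $H^+$ and $H^-$ (and $Z_0$). The two conditions $(F)$ and $(H,F)$ guarantee that the pieces of $Z_0$ meet each local $F$-orbit in a set of positive codimension, so that avoiding $\overline{Fx}\cap Z_0$ is, on the level of the parameter for $x$, a condition of the type ``stay away from a countable union of proper submanifolds'' — exactly the kind of target set that can be dodged in a Schmidt-type game.

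The key steps, in order, are as follows. First, I would recall (from the theory used in Theorem \ref{ba}, i.e.\ the Dani correspondence together with the fact, already cited in the excerpt via Mahler's criterion, that $Fx$ bounded $\iff$ the values $Q_0(x\nz)$ have a gap at $0$) that the set $\{x : Fx \text{ bounded}\}$ is itself a winning set for an absolute-game-type play — this is essentially Jarn\'ik's theorem recast dynamically, and appears in the work of Dani and in \cite{KW3}. Second, I would handle the avoidance condition: for a single transversal piece $Z_i$, I would show that the set $\{x : \overline{Fx}\cap Z_i = \varnothing\}$ contains a winning set, by playing the game in the parameter space of $Fx$-orbits and using transversality to ensure each encounter of $\overline{Fx}$ with $Z_i$ occurs at a point where the relevant tangent spaces are in general position, hence can be avoided by shrinking the interval Alice selects. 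Third, I would intersect: since the game can be chosen so that all these conditions are simultaneously winning (same $\alpha,\beta$, with the absolute-game framework allowing countable intersections), the countable union $Z=\bigcup_i Z_i$ poses no new difficulty, and the intersection with the boundedness condition is again winning, hence thick. Finally, I would derive Theorem \ref{ba countable} by applying this with $Z$ taken to be the countable union of the fibers over the points of $A$ under $Q_0$, checking that these fibers are $(F,H^\pm)$-transversal.

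The main obstacle I anticipate is the second step: making precise the statement that $\overline{Fx}$ — which can be a complicated set, not just a single compact orbit — meets each transversal piece $Z_i$ ``transversally enough'' to be avoidable in the game. The difficulty is that $\overline{Fx}$ depends on $x$ in a non-smooth way, so one cannot directly quantify the intersection; instead one must play the game so that at each stage the partially-determined orbit closure is kept away from an $\varepsilon$-neighborhood of $Z_i$, using return-time estimates for the $g_t$-flow and the transversality to show that a definite proportion of the admissible sub-intervals keeps the orbit outside that neighborhood. Converting the transversality conditions $(F)$ and $(H,F)$ into such a quantitative, uniform-on-compacta statement — and dovetailing it with the boundedness game so the two are won by a single strategy — is where the real work lies; the countability and the reduction to quadratic forms are then routine.
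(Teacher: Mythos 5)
Your overall plan — realize the set in \equ{result} as a set of HAW/absolute-game type, exploit transversality quantitatively, and use stability of such sets under countable intersections to handle both the countable union defining $Z$ and the boundedness condition — is the right skeleton, and it matches the high-level structure of the paper's proof. But the proposal leaves the central step unresolved and, more importantly, the way you propose to set it up would not work.

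The crucial issue is in your second step, where you propose to prove that $\{x : \overline{Fx}\cap Z_i = \varnothing\}$ is winning by ``playing the game in the parameter space of $Fx$-orbits,'' i.e.\ on a transversal to the $F$-foliation. The paper explicitly discusses this approach and explains why it fails. For the boundedness condition one can project to the (one-dimensional) $H^+$-direction because of the uniform stability statement \equ{stability}: $F^+x$ is bounded iff $F^+gx$ is bounded for \emph{all} $g\in H^-F$, with no dependence on $x$. The analogous statement for avoidance of $Z$ only holds in the form
\[
\overline{F^+x}\cap Z = \varnothing \ \iff\ \overline{F^+ \exp(\p)x}\cap Z = \varnothing\ \ \forall\,\p
\in \mathfrak{p} \text{ with }\|\p\| \le \vre,
\]
where $\vre>0$ depends on $x$ and admits no uniform lower bound. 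So one cannot reduce the game to a lower-dimensional transversal; the game must be played on $X$ itself. This forces the use of the higher-dimensional hyperplane game (and the machinery of \S3 for defining it coordinate-independently on a manifold), which your proposal does not account for.

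Beyond this, several structural ingredients that make the paper's argument go through are missing from your plan and are not ``routine.'' First, the paper does not attack $\overline{Fx}\cap Z=\varnothing$ directly; it splits into the two one-sided semigroups $F^{\pm}$ and proves HAW for each of the four sets (boundedness of $F^+x$, boundedness of $F^-x$, avoidance by $\overline{F^+x}$, avoidance by $\overline{F^-x}$), then intersects. Second, for the avoidance condition one must first thicken $Z$ to $Z_{[-\tau,\tau]}$ (Lemma \ref{transversality2}) and pass to a discrete-time semigroup $F^+_\tau$; without this reduction it is unclear how to organize the game against a continuous one-parameter family. Third, the actual winning strategy for the discrete-time avoidance set is not the ordinary hyperplane absolute game but the \emph{hyperplane percentage game} of Broderick--Fishman--Simmons (shown in Lemma \ref{percentage} to be equivalent to HAW), played with a careful partition of Bob's moves into ``windows'' and ``stages'' calibrated so that in each stage Alice can block, as hyperplane-neighborhoods, all the returns $g_\tau^k \exp(\x)y$ that threaten to land near $Z$, with the transversality constant $c(Z)$ and the chart distortion bounds making the neighborhoods of controlled size. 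This is precisely the quantitative, uniform-on-compacta content you correctly identify as the crux, but the proposal stops at acknowledging the difficulty rather than supplying the mechanism that resolves it.

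So: correct skeleton, correct identification of where the work lies, but the proposed implementation (play on the transversal) is the one the paper rules out, and the key reductions (one-sided, discrete-time, $Z$-thickening, percentage game) that actually carry the proof are absent.
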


Note that for arbitrary \hs s $X = \ggm$, non-quasiunipotent subgroups $F$ and {\it finite\/} sets $Z$ the thickness of the set \equ{result}
was conjectured by Margulis \cite{Ma}. Then in \cite{KM} it was shown
that, for mixing flows,  the set of points with bounded orbits is
thick, and  in    \cite{K}, for arbitrary \hs s and $Z$ being a {\it
  finite\/} union of $Z_i$ as in the above
theorem\footnote{Technically the statement in \cite[Corollary
  4.4.2]{K} is weaker than quoted here, namely $Z_i$ are assumed to
  satisfy condition $(F)$ and have dimension smaller than dimensions
  of $H^+$ and $H^-$ -- but the argument relies precisely on the
  combination of $(F,H^+)$-  and $(F,H^-)$-transversality.} 
--  that the set 
$$
\big\{x\in X :  \overline{Fx}\cap Z  = \varnothing\}
$$
is thick. Margulis' Conjecture was finally proved in \cite{KW3} using
the technique of Schmidt games, with the set $Z$ upgraded from finite
to countable. However the argument of  \cite{KW3} could not produce a
result for $Z$ being more than zero-dimensional, which, in particular,
is needed for an application to quadratic forms.  

\smallskip

{\em Organization of the paper.} In \S \ref{sec: gaps} we 
explain the reduction of Theorem
\ref{ba countable} to Theorem \ref{dyn countable}. In \S \ref{games} we describe a variant of Schmidt's $(\alpha, \beta)$-game. This variant is very close to the {\em absolute game} and {\em hyperplane absolute game}
introduced in \cite{Mc} and \cite{BFKRW} respectively, but
adapted to a situation in which we want to play on a smooth
manifold. In \S \ref{slicing} we state our main technical
result, Theorem \ref{h discrete winning} which implies Theorem
\ref{dyn countable}, and is a
result on the winning property of a certain set for 
the above game. We complete the proof of Theorem \ref{h discrete winning} in 
\S \ref{sec: completing proof}. 

\smallskip

{\em Acknowledgements.} The authors gratefully acknowledge the support
of BSF grant 2010428, ERC starter grant DLGAPS 279893, and NSF grant DMS-1101320. 

\section{Dynamics and gaps between values of quadratic
  forms}\name{sec: gaps}
In this section we explain how to reduce Theorem  \ref{ba countable}
to Theorem \ref{dyn countable}. As was mentioned in the 
introduction,  the equivalence between the set of values of the form
being bounded away from $0$ and the $F$-orbit of the corresponding
lattice  being bounded in $X$ is well-known. We need to understand how
to dynamically describe the set of quadratic forms whose values miss a
fixed $a\ne 0$. So fix a nonzero $a$ and choose $\vv\in\R^2$ such that
$Q_0(\vv) = a$. Not much will depend on this choice, yet the most
obvious one seems to be choosing \eq{defy}{\vv
  = \begin{cases}(\sqrt{a}, \sqrt{a}) \text{\ \ \  if } a >
    0\\(-\sqrt{a}, \sqrt{a}) \text{ otherwise } \end{cases}} which is
what we will do. Now define 
$$
\tilde Z_\vv \df \{x\in X : \vv\in x\}\,,
$$
that is, the set of unimodular lattices in $\R^2$ containing
$\vv$. The structure of this set is easy to describe: $ 
\tilde Z_\vv = \bigcup_{n\in \N}Z_{\frac{1}{n}\vv}$ where 
$$
 Z_\vv \df \{x\in X : \vv\in P(x)\}\,,
$$
the set of unimodular lattices in $\R^2$ containing $\vv$ as a {\it
  primitive\/} vector. The latter is simply a closed horocycle, namely
a periodic  orbit of the subgroup $V$ of $G$ stabilizing $\vv$, which
is easily seen to be, for our choice of $\vv$, equal to 
\eq{defu}{V \df  \begin{cases}\left\{\begin{pmatrix}1+s & -s\\ s &
        1-s \end{pmatrix} : s\in\R\right\}\text{\ \ \  \ \ if } a >
    0\\ \quad \\ \left\{\begin{pmatrix}-1-s & s\\ s &
        -1+s \end{pmatrix} : s\in\R\right\}\text{ otherwise
    } \end{cases}\,. 
}
 Now let us state a proposition which relates escaping $
\tilde Z_\vv$ with a gap in values of quadratic forms at $a$:

 \begin{prop}\name{tlink}  Suppose $x\in X$ is such that $Fx$ is
   bounded, let $a \neq 0$, and define $\vv$ by \equ{defy}. Then  $
   \overline{Fx}\cap \tilde Z_\vv  = \varnothing$ if and only if  $a\notin
   \overline{Q_0(x\nz)}$. \end{prop}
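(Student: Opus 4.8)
The plan is to unwind both sides of the claimed equivalence into statements about the orbit $Fx$ approaching lattices containing (a scalar multiple of) $\vv$, using the identification of $\tilde Z_\vv$ with $\bigcup_{n\in\N} Z_{\frac1n\vv}$ and the basic dictionary between lattice points and values of $Q_0$. First I would observe that, since $Q_0(g_t \w) = Q_0(\w)$ for all $t$ (the form $Q_0(x,y)=xy$ is $F$-invariant by the very choice of $F$ in \equ{defF}), we have $Q_0(g_t x \nz) = Q_0(x\nz)$ for every $t$, so $\overline{Q_0(x\nz)}$ is an $F$-invariant closed set; moreover $a\in \overline{Q_0(x\nz)}$ if and only if there exist $t_k\in\R$ and nonzero $\w_k\in x$ with $Q_0(g_{t_k}\w_k)\to a$, which, after replacing $g_{t_k}\w_k$ by itself, says exactly that there are nonzero vectors $\vu_k\in g_{t_k}x$ with $Q_0(\vu_k)\to a$.

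Next I would turn the right-hand condition ``$a\notin\overline{Q_0(x\nz)}$'' into: there is $\varepsilon>0$ so that no lattice in $\overline{Fx}$ contains a nonzero vector $\vu$ with $|Q_0(\vu)-a|<\varepsilon$. This uses that $\overline{Fx}$ is compact (given) together with the fact that the map $(y,\vu)\mapsto Q_0(\vu)$ on the set of (lattice, short-or-bounded-length vector) pairs is continuous and that in a neighborhood of $a$ only vectors of bounded norm can take values near $a$ — here is where boundedness of $Fx$, hence a uniform lower bound on the shortest vector and an upper bound on how large a vector can be while keeping $Q_0$ near $a$, gets used. Dually, I would rewrite ``$\overline{Fx}\cap \tilde Z_\vv=\varnothing$'' using $\tilde Z_\vv=\bigcup_n Z_{\frac1n\vv}$ and $Q_0(\tfrac1n\vv)=\tfrac1{n^2}a$: a lattice $y$ lies in $\tilde Z_\vv$ iff $y$ contains $\vv$ iff it contains some primitive $\tfrac1n\vv$, i.e.\ $y\in Z_{\frac1n\vv}$ for some $n$. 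So $\overline{Fx}\cap\tilde Z_\vv=\varnothing$ says no $y\in\overline{Fx}$ contains any vector $\tfrac1n\vv$.

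The two directions are then short. For ($\Leftarrow$): if $a\notin\overline{Q_0(x\nz)}$ but some $y\in\overline{Fx}$ contained $\vv$, then $y=\lim g_{t_k}x$ would force vectors $\vu_k\in g_{t_k}x$ with $\vu_k\to\vv$ (approximating the vector $\vv\in y$ by lattice vectors of $g_{t_k}x$, using that $g_{t_k}x\to y$ in the space of lattices means Hausdorff convergence on bounded sets), hence $Q_0(\vu_k)\to Q_0(\vv)=a$, contradicting $a\notin\overline{Q_0(x\nz)}$ via the reformulation above; this needs only continuity of $Q_0$ and no boundedness. For ($\Rightarrow$): if $a\in\overline{Q_0(x\nz)}$, pick $\vu_k\in g_{t_k}x\nz$ with $Q_0(\vu_k)\to a$; boundedness of $Fx$ gives $\|\vu_k\|$ bounded away from $0$ and above, so after passing to a subsequence $g_{t_k}x\to y\in\overline{Fx}$ and $\vu_k\to\vu\in y$ with $Q_0(\vu)=a\neq 0$, so $\vu\neq 0$; then $\vu=c\vh$ for the unit-in-direction $\vh$, and since our explicit $\vv$ in \equ{defy} is a specific vector with $Q_0(\vv)=a$ and $V$ (as in \equ{defu}) acts transitively on $\{Q_0=a\}\cap$(appropriate component), I can conjugate by an element of $F$ (which preserves $Fx$ and $\overline{Fx}$) to arrange $\vu=\vv$, putting $y\in\tilde Z_\vv\cap\overline{Fx}$. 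I expect the main obstacle to be this last normalization step: making precise that one may move the accumulated vector $\vu$ (with $Q_0(\vu)=a$) onto the distinguished representative $\vv$ by an element of $F$, keeping $\overline{Fx}$ invariant — one must check that $F$ indeed acts transitively on the relevant level set component (equivalently, that the level set $\{Q_0=a\}$ meets every $F$-orbit of vectors in a single orbit, which follows from the explicit matrix forms), and handle the minor sign/scaling bookkeeping coming from the two cases $a>0$ and $a<0$ in \equ{defy}. Everything else is continuity plus Mahler compactness packaged in the boundedness hypothesis.
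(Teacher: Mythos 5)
Your ``if'' direction ($a\notin\overline{Q_0(x\nz)}\Rightarrow\overline{Fx}\cap\tilde Z_\vv=\varnothing$) is correct and is exactly the paper's argument: approximate $\vv\in x_0=\lim g_{t_n}x$ by nonzero $\vu_n\in g_{t_n}x$, and push the $Q_0$-values back to $x$ by $F$-invariance.

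The ``only if'' direction has a genuine gap. You pick $\vu_k\in g_{t_k}x\nz$ with $Q_0(\vu_k)\to a$ and assert that boundedness of $Fx$ gives ``$\|\vu_k\|$ bounded away from $0$ and above.'' The lower bound is Mahler compactness and is fine, but the upper bound is false without a specific choice of $t_k$: the level set $\{Q_0=a\}$ is an unbounded hyperbola, and a lattice can perfectly well contain a sequence of vectors of norm tending to infinity with $Q_0$-values tending to $a$ (take $t_k=0$, so $\vu_k=\vv_k\in x$; nothing stops $\|\vv_k\|\to\infty$). Compactness of $\overline{Fx}$ controls shortest vectors, not vectors on a fixed level set of $Q_0$. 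With an arbitrary choice of $t_k$ you therefore cannot extract a subsequential limit $\vu\in y$, and the rest of the argument (including your normalization idea) never gets off the ground. The fix, which is precisely what the paper does, is to \emph{choose} $t_k$ so that $g_{t_k}\vv_k$ lies on the line through $\vv$. Then $g_{t_k}\vv_k=s_k\vv$ with $s_k^2 a\to a$, hence $g_{t_k}\vv_k\to\pm\vv$ automatically (no boundedness needed); boundedness of $Fx$ is then used only to extract a convergent subsequence of $g_{t_k}x$, whose limit $x_0$ must contain $\pm\vv$ and hence lies in $\tilde Z_\vv$. In effect your ``conjugate by $F$ to arrange $\vu=\vv$'' is the right move, but it must happen before taking limits (as a choice of $t_k$), not after --- applied after, there is no limit to normalize.

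Two smaller points: you write that $V$ (as in \equ{defu}) acts transitively on a component of $\{Q_0=a\}$; $V$ is the \emph{stabilizer} of $\vv$, so it fixes $\vv$ and cannot act transitively --- you mean $F$, which does act simply transitively on each branch of the hyperbola. Also keep in mind the $\pm$ ambiguity: the $F$-orbit of $\vv$ is one branch of $\{Q_0=a\}$, so you may land on $-\vv$ rather than $\vv$; this is harmless since lattices are symmetric under $\vu\mapsto-\vu$.

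Your surrounding remarks (``only vectors of bounded norm can take values near $a$'') point to the same misconception; this statement is false for an indefinite binary form, whose nonzero level sets are unbounded, and the proposition would be substantially easier if it were true.
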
 
 
 \begin{proof} 
For the `if' direction, suppose that $x_0 \in \overline{Fx} $ contains
$\vv$. Then there are $t_n$ such that $g_{t_n}x \to x_0$ and so there
are $\w_n \in x$ with $g_{t_n}\w_n \to \vv$. In particular $\w_n \neq
0$. Hence
$Q_0(\w_n)=Q_0(g_{t_n}\w_n) \to Q_0(\vv)=a$, so $a \in \overline{Q_0(x \sm
\{0\})}$, a contradiction. 
 
Suppose the converse does not hold, that is, there
   exist vectors $\vv_k\in x$ such that $Q_0(\vv_k) \to a$ as
   $k\to\infty$. For each $k$, choose $t_k\in \R$ such that
   $g_{t_k}\vv_k$ belongs to the line passing through $\vv$; then it is
   easy to see that 
\eq{conv}{g_{t_k}\vv_k\to \vv\text{ as
     }k\to\infty\,.} 
But $Fx$ is relatively compact, hence there
   exists a limit point $x_0$ of the sequence $g_{t_k}x$ of lattices,
   and from \equ{conv} it follows that $x_0$ contains $\vv$, or,
   equivalently, $x_0\in  \tilde Z_\vv$, contrary to 
   assumption. 
 \end{proof}
 
\begin{remark} {\rm
The boundedness of $Fx$ was only used in proving the implication
$\implies$. If $Fx$ is not bounded (or equivalently, $0 \in
\overline{Q_0(x \sm \{0\})}$) then the situation is more 
interesting. A simple argument involving multiplication by integers,
shows that when $Q_0(x \sm \{0\}) $ contains sequences approaching $0$
from both sides, then the set of values $Q_0(x \sm \{0\})$ is
dense. However, as shown by Oppenheim \cite{Oppenheim}, lattices $x$
for which $0$ is only a one-sided limit of $Q_0(x \sm \{0\})$ do
exist. }
\end{remark}

We record the following:

 \begin{lem}\name{transversality}  For  $\vv$ as in \equ{defy} (in fact, the same is true for every $\vv$ with $Q_0(\vv) \ne 0$), the manifold
 $Z_\vv$ is both $(F,H^+)$-transversal and $(F,H^-)$-transversal. \end{lem}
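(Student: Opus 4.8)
The plan is to compute the relevant tangent spaces explicitly at a point $x\in Z_\vv$ and verify the two conditions of $(F,H^\pm)$-transversality by a direct linear-algebra computation in the three-dimensional space $T_xX\cong\g=\mathfrak{sl}_2(\R)$. Since $Z_\vv$ is the periodic orbit of the one-parameter group $V$ stabilizing $\vv$, at any $x\in Z_\vv$ we have $T_x(Z_\vv)=\mathrm{d}(R_g)_e(\mathfrak v)$ where $\mathfrak v=\Lie(V)$, $T_x(Fx)=\mathrm{d}(R_g)_e(\mathfrak f)$ with $\mathfrak f=\Lie(F)=\R\begin{pmatrix}1&0\\0&-1\end{pmatrix}$, and similarly $T_x(H^\pm x)$ corresponds to $\mathfrak h^\pm=\Lie(H^\pm)$. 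Because all three tangent directions are obtained by right-translating fixed one-dimensional subspaces of $\g$ by the same group element, transversality of $Z_\vv$ at every point reduces to the single claim that $\mathfrak v$, $\mathfrak f$, and each of $\mathfrak h^+$, $\mathfrak h^-$ together span $\g$; condition $(F)$ becomes $\mathfrak v\not\subset\mathfrak f$ (automatic, since $V\ne F$) and condition $(H,F)$ becomes $\mathfrak h^\pm\not\subset \mathfrak v+\mathfrak f$.

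First I would read off $\mathfrak v$ from \equ{defu}: differentiating at $s=0$ gives $\mathfrak v=\R\begin{pmatrix}1&-1\\1&-1\end{pmatrix}$ in the $a>0$ case and $\R\begin{pmatrix}-1&1\\1&-1\end{pmatrix}$ in the $a<0$ case (the two generators differ by sign, so they span the same line). Next, $\mathfrak f=\R\,\mathrm{diag}(1,-1)$ and $\mathfrak h^+=\R\begin{pmatrix}0&1\\0&0\end{pmatrix}$, $\mathfrak h^-=\R\begin{pmatrix}0&0\\1&0\end{pmatrix}$. Then I would simply check that $\det$ of the three $2\times2$ matrices, written as vectors in the basis $\{E_{12},E_{21},\mathrm{diag}(1,-1)\}$ of $\mathfrak{sl}_2$, is nonzero: for the triple $\{\mathfrak v,\mathfrak f,\mathfrak h^+\}$ with $\mathfrak v$-generator $(1,-1,1)$ (coordinates $(E_{12}\text{-part},E_{21}\text{-part},\mathrm{diag}\text{-part})$), $\mathfrak f=(0,0,1)$, $\mathfrak h^+=(1,0,0)$, the determinant is $\pm1\ne0$; likewise for $\mathfrak h^-=(0,1,0)$. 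The $a<0$ case is identical after the sign change, which does not affect spanning. This establishes that the lines $T_x(H^\pm x)$, $T_xZ_\vv$, $T_x(Fx)$ are in general position, which, as noted in the excerpt for one-dimensional $Z$, is exactly $(F,H^\pm)$-transversality.

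Finally, to justify the parenthetical remark that the conclusion holds for every $\vv$ with $Q_0(\vv)\ne0$: the group $V$ stabilizing such a $\vv$ is conjugate — in fact, $G$ acts transitively on $\{\vv:Q_0(\vv)=c\}$ for each fixed $c\ne0$ via the $Q_0$-preserving subgroup — but more directly, one can verify that for any $\vv=(v_1,v_2)$ with $v_1v_2\ne0$ the Lie algebra $\mathfrak v$ of $\mathrm{Stab}_G(\vv)$ is the span of $\begin{pmatrix}v_1v_2&-v_1^2\\v_2^2&-v_1v_2\end{pmatrix}$ (the unique up-to-scalar trace-zero matrix killing $\vv$), whose coordinates $(-v_1^2,v_2^2,v_1v_2)$ have all entries nonzero, so the same three determinant computations give nonzero answers. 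I do not expect any serious obstacle here; the only point requiring a little care is confirming that $T_x$ of each orbit really is the right-translate of the corresponding Lie subalgebra at \emph{every} point of the periodic orbit $Z_\vv$ (so that a single computation in $\g$ suffices), which is immediate from the fact that $X=\ggm$ is a homogeneous space and the orbit maps $g\mapsto gx$, $g\mapsto $ (translate of $x$) are smooth with the stated differentials.
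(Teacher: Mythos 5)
Your argument matches the paper's own proof, which likewise reduces both transversality conditions at every point of $Z_\vv$ to the single Lie-algebraic check that $\Lie(V)$, $\Lie(F)$, and each of $\Lie(H^\pm)$ together span $\g=\mathfrak{sl}_2(\R)$, and then verifies this by direct computation from \equ{defF}, \equ{horosph}, \equ{defu}. Note two small sign slips in your arithmetic — for $a>0$ the generator of $\mathfrak v$ has $(E_{12},E_{21},\diag)$-coordinates $(-1,1,1)$ rather than $(1,-1,1)$, and for $a<0$ (where the curve in \equ{defu} passes through $-I$, not $I$) the Lie-algebra direction is $\R\begin{pmatrix}1&-1\\-1&-1\end{pmatrix}$ — but neither affects the nonvanishing of your determinants, so the conclusion stands.
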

 
 \begin{proof} It suffices to show that the Lie algebras of $F$, $V$
   and $H^+$, as well as those of $F$, $V$ and $H^+$, span the Lie
   algebra of $G$. This is an easy computation  using \equ{defF},
   \equ{horosph} and \equ{defu}.\end{proof}

 We can now see that Theorem \ref{ba countable}  follows from Theorem \ref{dyn countable}:

 \begin{proof}[Proof of Theorem \ref{ba countable} assuming Theorem \ref{dyn countable}]  
 We have already mentioned that the boundedness of the $F$-orbit of
 $x$ implies that $0$ is not in the closure of the set
 $Q_0(x\nz)$. Now for each $a\in A\ssm 0$ take $\vv = \vv(a)$ as in
 \equ{defy}, then, in view of Lemma \ref{transversality}, the set $$ Z
 = \bigcup_{a\in A\nz}\tilde Z_{\vv(a)}$$ satisfies the assumption of
 Theorem \ref{dyn countable}, hence the set \equ{result} is thick. 
 On the other hand, Proposition \ref{tlink} implies that  for every
 $x$ from the set, $ \overline{Q_0(x\nz)}\cap A = \varnothing$.
 \end{proof}

\section{Schmidt games}\name{games}
We first recall Schmidt's $(\alpha, \beta)$-game, introduced in
\cite{S1}. The game is played by two players Alice and Bob on a
complete metric space\footnote{In this section, following tradition,
  we denote this metric space by $X$; elsewhere $X$ continues
  to denote the space of two-dimensional unimodular lattices.} $X$ equipped 
with a target set $S$ and two fixed parameters $\alpha, \beta \in
(0,1)$. Bob begins the $(\alpha,\beta)$-game
by choosing $x_1 \in X, r_1>0$, thus specifying the closed ball $B_1
\df \bar{B}(x_1, r_1)$, where 
$$\bar{B}(z,\rho)  \df \{x \in X: \dist(x,z) \leq \rho\}.$$
Then Alice and Bob continue by alternately  
choosing $x'_i,\, x_{i+1}$ so that 
$$
\dist(x_i, x'_i) \leq (1-\alpha)r_i, \ \  \dist(x'_{i+1}, x_i) \leq (1-\beta)r'_i ,
\ \ \text{ where } \  
r'_i \df  \alpha r_i, \ \  r_{i+1} \df  \beta r'_i.
$$
This implies that 
the closed balls 
$$
A_i \df  \bar{B} (x'_i, r'_i), \ \ B_{i+1} \df
\bar{B}(x_{i+1}, r_{i+1})
$$
are nested, i.e. 
$$B_1 \supset A_1 \supset B_2 \supset\cdots$$
The set $S$ is said to be 
 {\sl
  $\alpha$-winning\/} if for any
$\beta>0$ Alice has a strategy in the  $(\alpha,\beta)$-game guaranteeing that  
the unique point of intersection 
$\bigcap_{i=1}^\infty B_i= \bigcap_{i=1}^\infty A_i$ of all the balls
belongs to $S$,
regardless of the way Bob chooses to play. It is called {\sl
  winning\/} if it is $\alpha$-winning for some  
$\alpha$. 

\smallskip
In \cite{BFKRW}, 
inspired by ideas of McMullen \cite{Mc}, the
{\em absolute hyperplane game} was introduced. This modification is 
played on $\R^d$. Let $S \subset \R^d$ be a target set and let 
$\beta \in \left(0, \frac13 \right)$. 
As 
before Bob begins by choosing a closed ball $B_1$ of radius $r_1$ and
then Alice and Bob alternate moves. The sets $B_i$ chosen by Bob are
closed balls of radii $r_i$ 
satisfying 
$$r_{i+1} \geq \beta r_{i}\,.$$
The sets 
$A_i$ chosen by Alice are $r'_i$-neighborhoods of affine
hyperplanes, where the $r'_i$ satisfy 
$r'_i \leq \beta r_i. $
Additionally Bob's choices must satisfy 
$$B_{i+1} \subset B_i \sm A_i\,.$$
Then $S \subset \R^d$ is said to be {\sl $\beta$-HAW\/} (where HAW is
an acronym for {\em hyperplane absolute winning}) if
Alice has a strategy which leads to 
\eq{Alicewins}{\bigcap_{i=1}^\infty B_i\cap S\ne\varnothing} 
regardless of how Bob chooses to play; $S$ is said to be  {\sl
  HAW\/} if it is $\beta$-HAW for all $0
  <\beta < \frac13$. It is easy to see that $\beta$-HAW implies
  $\beta'$-HAW whenever $\beta \le \beta' < 1/3$; thus a set is HAW
  iff it is $\beta$-HAW for arbitrary
  small positive $\beta$. In the case $d = 1$ hyperplanes are points,
  and thus the HAW property coincides with the
  {\sl absolute winning\/}
  property\footnote{More generally, the paper \cite{BFKRW}  introduced
    $k$-dimensional absolute winning for every $0 \le k < d$; the case
    $k=0$ was considered earlier by McMullen.} introduced by McMullen
  in  \cite{Mc}. 

\ignore{
The parameter $\beta_0$ is chosen so as to
  guarantee that at every stage of the game, there are legal moves for
  Bob; i.e. that for any sequence of choices $B_1, A_1, \ldots, B_i,
  A_i$, there is a closed ball $B_{i+1}$ in $X$ with radius satisfying
  \equ{eq: Bobs choice} which
  also satisfies \equ{eq: Bobs choice 2}.  It is easy to see that
  $\beta_0 = 1/3$ works for $X = \R^d$, 
  and in \cite{HAW} several sufficient conditions on a set $X$ were
  given, in terms of measures supported on $X$, guaranteeing the
  existence of $\beta_0$ with this property. 

}
\smallskip
The following proposition summarizes properties of winning 
and HAW subsets of $\R^d$:
\begin{prop}\name{properties}
\begin{itemize}
\item[(a)] Winning sets are thick.
\item [(b)] HAW implies winning. 
\item[(c)]  The countable intersection of $\alpha$-winning (resp., HAW) sets is again $\alpha$-winning (resp., HAW). 
\item  [(d)]
The  image of a HAW 
set under a $C^1$ diffeomorphism $\R^d \to \R^d$ is HAW.
\end{itemize}\end{prop}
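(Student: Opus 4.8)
All four statements are by now standard, and the plan is simply to recall the four arguments. (Parts (a) and (c) for $\alpha$-winning sets go back to Schmidt \cite{S1}; parts (b), (d), and (c) for HAW sets are from \cite{BFKRW}, after McMullen \cite{Mc}.) For (a), the key point is that an $\alpha$-winning set remains winning inside every ball: Bob may open the game with an arbitrary closed ball $B_1$, and the common point of the play then lies in $B_1\cap S$, so it suffices to exhibit, inside a given $B_1$, a Cantor subset of $S$ of \hd\ exceeding $d-\varepsilon$ for every $\varepsilon>0$. Fixing an $\alpha$-winning strategy for Alice and a small $\beta$, consider the play-tree in which Bob, at each stage, branches into $\asymp(1/\beta)^d$ well-separated sub-balls of the ball just produced by Alice's move, Alice responding along each branch by her strategy; iterating, the associated Cantor set lies in $S\cap B_1$ and has \hd\ at least roughly $\frac{d\log(1/\beta)}{\log(1/(\alpha\beta))}$, which tends to $d$ as $\beta\to0$. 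Since $S$ is $\alpha$-winning for every $\beta$, this gives full \hd\ in every ball, i.e.\ thickness.

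For (b) I would show that a HAW set $S$ is $\alpha$-winning for every $\alpha<\tfrac12$. Given $\beta'\in(0,1)$, choose $\beta\in(0,\tfrac13)$ small in terms of $\alpha$ and $\beta'$ (so that $\beta\le\alpha\beta'$ and $2\alpha+\tfrac\beta2\le1$), and have Alice run, alongside the $(\alpha,\beta')$-game, a virtual $\beta$-HAW game for $S$ whose Bob-balls are the actual balls $B_i$ — a legal play, since $r_{i+1}=\alpha\beta'r_i\ge\beta r_i$. When the virtual strategy recommends a neighbourhood $\widetilde A_i$ of an affine hyperplane of width $\le\beta r_i$ inside $B_i$, Alice plays, as her real move, a ball of radius $\alpha r_i$ inside $B_i$ that is disjoint from $\widetilde A_i$; there is room for such a ball precisely because $2\alpha+\tfrac\beta2\le1$. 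Any $B_{i+1}\subset A_i$ that Bob then chooses lies in $B_i\setminus\widetilde A_i$, so the virtual play stays legal, and since $S$ is $\beta$-HAW and the radii tend to $0$, its common point lies in $S$.

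For (c), Alice interleaves strategies for the $S_k$: she devotes the turns in a suitable residue class, with bounded gap $G_k$, to the target $S_k$, running on those turns a virtual game for $S_k$ whose parameter is small enough (one can take it to be $\beta^{G_k}$ in the HAW case) that the actual ball reached $G_k$ turns later — which has shrunk by at most a bounded factor — is a legal virtual-Bob move, and whose recommended moves are legal real moves; the intervening turns merely nest and shrink Bob's balls. In the $\alpha$-winning case the radii shrink automatically and the matching can in fact be arranged exactly. In the HAW case, to be sure that $\bigcap_iB_i$ is a single point (so that it lies in every $S_k$), Alice additionally inserts periodic ``bisecting'' moves: a thin slab through the centre of Bob's ball, which forces the next radius below $r_i/2$. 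Either way, the common point lies in $\bigcap_kS_k$.

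For (d), let $f$ be a $C^1$ diffeomorphism of $\R^d$ and $S$ HAW. Alice plays the $\beta$-HAW game for $f(S)$ while maintaining a virtual HAW game for $S$ in the domain: she takes virtual-Bob's $i$-th ball to be a slight enlargement of the smallest ball containing $f^{-1}(B_i)$ (so that $f$ carries the virtual common point into $\bigcap_iB_i$), feeds it to a winning strategy for $S$, and plays, in the real game, an affine-hyperplane-neighbourhood containing the $f$-image of each recommended virtual move. One may assume Bob's radii tend to $0$: otherwise $\bigcap_iB_i$ contains a ball, and $f(S)$, being thick by (a) and (b), already meets it. The main obstacle — and the most delicate point of the whole proposition — is the geometric bookkeeping here: under a mere $C^1$ map, $f$ of an affine hyperplane is affine only to first order, and the enclosing ball of $f^{-1}(B_{i+1})$ is nested inside that of $f^{-1}(B_i)$ only up to an error of order the current scale. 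Both errors are $o(\text{scale})$, so by letting Alice make wasteful moves until Bob's ball is small enough, and confining the play to a fixed compact set (on which $f$ is uniformly close to its linearisation, with $Df$ of bounded condition number $\kappa$), all these relative errors become uniformly tiny and can be absorbed by running the virtual game with parameter $\beta'':=\beta/(2\kappa)<\tfrac13$ — for which $S$ is still HAW, since a HAW set is $\beta''$-HAW for \emph{every} $\beta''<\tfrac13$. This makes the virtual play legal, hence won, so its common point, which $f$ maps into $\bigcap_iB_i$, witnesses $\bigcap_iB_i\cap f(S)\ne\varnothing$.
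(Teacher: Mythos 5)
The paper's proof of this proposition is nothing more than the single line citing \cite{S1, Mc, BFKRW}; your four sketches faithfully reproduce the standard arguments from exactly those sources (Schmidt for (a) and the $\alpha$-winning part of (c); McMullen and the BFKRW paper for (b), (d), and the HAW part of (c)), so you and the paper are taking the same route. Two cosmetic points: in (b), with the paper's convention that Alice's HAW move is an $r'_i$-neighbourhood of a hyperplane with $r'_i\le\beta r_i$ (so full thickness $2r'_i$), the room-for-a-disjoint-ball inequality should read $2\alpha+\beta\le1$ rather than $2\alpha+\tfrac{\beta}{2}\le1$, though this does not affect the conclusion that HAW implies $\alpha$-winning for every $\alpha<1/2$; and in (c) and (d) your periodic bisecting moves to force $r_n\to0$ can be replaced by the paper's observation in Remark~\ref{remark: radii} that a strategy winning whenever $r_n\to0$ already forces the target to be dense, so the non-shrinking case is won automatically.
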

For the proofs, see \cite{S1, Mc, BFKRW}.

\begin{remark}\name{remark: radii} {\rm
Note that in the hyperplane absolute version the radii of balls do not have to tend to zero,
therefore $\cap_i B_i$ 
does not have to be a single point. However 
the outcome with radii not tending to $0$ is clearly winning for Alice
as long as $S$ is dense. Thus in all the proofs of the HAW property it
will be safe to assume that Bob plays so that $r_n\to 0 $ as
$n\to\infty$: indeed, if Alice has a strategy which is guaranteed to
win the game whenever $r_n \to 0$, then the
target set must be dense, and hence the strategy is guaranteed to work
even if $r_n \not \to 0$.}
\end{remark}

We will be interested in playing variants of the two games described
above on a differentiable manifold. Note that a manifold is not
equipped with an intrinsic metric, nor with an intrinsic notion of
affine submanifolds, and thus the definitions given above cannot be
applied directly. Our approach will be to work in a given coordinate
system and argue using Proposition \ref{properties} that the class of winning
sets does not depend on the choice of a coordinate system. It will be
technically simpler to work with the hyperplane absolute game and this
is all that we require for the present paper. We proceed to
the details. 

We first define the absolute hyperplane game on an open subset $W
\subset \R^d$. This is defined just as the absolute hyperplane game on
$\R^d$, except for requiring that Bob's first move $B_1$ be
contained in $W$. If Alice has a winning strategy, we say that $S$ is
{\em HAW on $W$}. Now 
suppose $X$ is a $C^1$ $d$-dimensional manifold equipped with an atlas of
charts $\mathcal{A} = (U_\alpha, \varphi_\alpha)$; that is, $X$ is a separable topological
space, the sets $U_\alpha$ are open subsets of $X$ with $X =
\bigcup_\alpha U_\alpha$, each $\varphi_\alpha: U_\alpha \to \R^d$ is
a homeomorphism onto its image, and the transition functions
$\varphi_\beta \circ \varphi_\alpha^{-1}: \varphi_\alpha(U_\alpha \cap
U_\beta) \to \R^d$ are $C^1$. To define the absolute hyperplane
game on $(X, \mathcal{A})$, we specify a target set $S \subset X$.  
 Bob begins play by choosing one coordinate chart
$(U, \varphi) = (U_\alpha, \varphi_\alpha)$ in $\mathcal{A}$ and a closed ball $B_1 \subset \R^d$
contained in $\varphi (U)$. From this point on the game continues as
before, where Alice and Bob alternate moves in $\varphi(U) \subset
\R^d$. To decide the winner they check whether the point of
intersection $\bigcap_i B_i$ belongs to $\varphi(S)$. If Alice has a
winning strategy, we say that $S$ is {\em HAW on $(X, \mathcal{A})$.}

Note that when $W$ is an open subset of $\R^d$, the definition of the
game on $W$, given at the beginning of the previous paragraph,
coincides with the definition of the game on $(X, \mathcal{A})$ if we
take 
$X=W$ and take $\mathcal{A}$ to be the atlas consisting of one chart $(W,
\mathrm{Id}).$ 
Also 
note that Bob has been given the additional prerogative of choosing a
coordinate chart within which to work, at the start of play, and this
appears to make the winning property very restrictive. However we have: 

\begin{prop}\name{prop: does not depend on chart}
Suppose $X$ is a $C^1$  manifold with an atlas $\mathcal{A}$ and $(U_i, \varphi_i)$ is a 
system of coordinate charts 
in $\mathcal{A}$, such that $X = \bigcup U_i$, and $S \subset X$. Then $S$ is
HAW on $(X, \mathcal{A})$ if and only if for 
each $i$, $\varphi_i(S)$ is HAW  on
$\varphi_i(U_i)$. 
\end{prop}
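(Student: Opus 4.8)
The plan is to prove the two implications separately, with the forward direction being essentially immediate and the reverse direction being the substantive one. For the forward direction, suppose $S$ is HAW on $(X,\mathcal{A})$; I want to show each $\varphi_i(S)$ is HAW on $\varphi_i(U_i)$. Given $\beta$, Alice must win the game played in $\varphi_i(U_i)\subset\R^d$. But this game is a special case of the game on $(X,\mathcal{A})$: Bob's opening move choosing $B_1\subset\varphi_i(U_i)$ can be regarded as Bob choosing the chart $(U_i,\varphi_i)$ from $\mathcal{A}$ together with that ball, and from then on the two games are literally identical. So Alice's winning strategy for the game on $(X,\mathcal{A})$ restricts to a winning strategy here. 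This step requires essentially no work once the definitions are unwound.

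For the reverse direction, assume $\varphi_i(S)$ is HAW on $\varphi_i(U_i)$ for every $i$; I must produce, for arbitrarily small $\beta$, a winning strategy for Alice in the game on $(X,\mathcal{A})$. Here Bob opens with some chart $(U,\varphi)=(U_\alpha,\varphi_\alpha)\in\mathcal{A}$ and a ball $B_1\subset\varphi(U)$. The difficulty is that $\varphi$ may not be one of the privileged charts $\varphi_i$, so I cannot apply the hypothesis directly in the $\varphi$-coordinates. The idea is to transport the play into one of the good charts. Since $X=\bigcup U_i$, pick $i$ with $\varphi^{-1}(\text{center of }B_1)\in U_i$; after Bob shrinks further — or by first having Alice waste a move and letting Bob play — we may assume $\varphi^{-1}(B_1)\subset U_i$, so the transition map $\psi\df\varphi_i\circ\varphi^{-1}$ is a $C^1$ diffeomorphism defined on a neighborhood of $B_1$. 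Then $\psi$ conjugates the game Bob is playing in $\varphi$-coordinates to a game in $\varphi_i$-coordinates whose target set is $\psi(\varphi(S)\cap(\text{relevant region}))$, which agrees with $\varphi_i(S)$ on that region.

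The key technical point is that the HAW property is preserved under $C^1$ diffeomorphisms — this is exactly Proposition \ref{properties}(d) — and, more precisely here, that a $C^1$ diffeomorphism sends a hyperplane-absolute game to (something that Alice can treat as) a hyperplane-absolute game. I would invoke the $C^1$-invariance to argue as follows: if $\varphi_i(S)$ is HAW on $\varphi_i(U_i)$, then its preimage $\psi^{-1}(\varphi_i(S))=\varphi(S)$ (restricted suitably) is HAW on the corresponding open subset of $\varphi(U)$, hence in particular on the ball $B_1$ Bob chose. The one subtlety to address is that a $C^1$ diffeomorphism does not map affine hyperplanes to affine hyperplanes, nor balls to balls, so the conjugated game is not literally a HAW game; the resolution, which is the content of the proof of Proposition \ref{properties}(d), is that Alice can still win by using the fact that near any point a $C^1$ map is well-approximated by an affine one, so her hyperplane-neighborhood moves in one chart correspond, up to controlled distortion absorbed by shrinking $\beta$, to hyperplane-neighborhood moves in the other. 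I expect this transfer argument — carefully reducing Bob's opening chart to a good chart via a transition diffeomorphism and then quoting $C^1$-invariance of HAW — to be the main obstacle, while the rest is bookkeeping with the definitions.

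\begin{proof}[Proof sketch of Proposition \ref{prop: does not depend on chart}]
Suppose first that $S$ is HAW on $(X,\mathcal{A})$ and fix $i$. Given $\beta\in(0,1/3)$, a play of the hyperplane absolute game for $\varphi_i(S)$ on $\varphi_i(U_i)$ is, after identifying Bob's opening move with the choice of the chart $(U_i,\varphi_i)\in\mathcal{A}$ together with the same ball $B_1$, the same as a play of the game for $S$ on $(X,\mathcal{A})$; Alice's winning strategy for the latter therefore restricts to a winning strategy for the former.

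Conversely, suppose $\varphi_i(S)$ is HAW on $\varphi_i(U_i)$ for each $i$, and fix $\beta\in(0,1/3)$. In the game for $S$ on $(X,\mathcal{A})$, Bob opens with a chart $(U,\varphi)\in\mathcal{A}$ and a ball $B_1\subset\varphi(U)$. Choose $i$ with $\varphi^{-1}(\text{center}(B_1))\in U_i$; by allowing Alice a preliminary move and letting Bob respond, we may assume $\varphi^{-1}(B_1)\subset U_i$. Then $\psi\df\varphi_i\circ\varphi^{-1}$ is a $C^1$ diffeomorphism from a neighborhood of $B_1$ onto an open subset of $\varphi_i(U_i)$, and applying $\psi$ to every move turns the game Bob is playing into a game in $\varphi_i$-coordinates with target set agreeing with $\varphi_i(S)$ on the relevant region. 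By Proposition \ref{properties}(d) the HAW property is preserved under $C^1$ diffeomorphisms, so $\varphi(S)$ is HAW on $\varphi(U)$, and hence Alice wins starting from $B_1$; choosing her moves and pulling back by $\psi^{-1}$ yields a winning strategy for $S$ on $(X,\mathcal{A})$.
\end{proof}
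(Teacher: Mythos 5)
Your forward direction is correct and is exactly the paper's argument. The reverse direction has the right overall plan (reduce to a ball that lies inside a single good chart, then transfer via the $C^1$-invariance of HAW from Proposition \ref{properties}(d)), but the reduction step contains two genuine gaps.

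First, the claim that ``by allowing Alice a preliminary move and letting Bob respond, we may assume $\varphi^{-1}(B_1)\subset U_i$'' does not hold as stated: in the hyperplane absolute game the only constraint on the radii is the lower bound $r_{n+1}\ge\beta r_n$, so Bob may keep the radii bounded away from zero forever, and Alice's dummy moves (choosing $N_i=0$, or a hyperplane missing $B_i$) do not force any shrinkage. The paper handles this case explicitly: if the radii do not tend to $0$ then $\bigcap B_n$ has nonempty interior, and since the hypothesis implies $S$ is dense (each $\varphi_i(S)$ is HAW, hence thick and dense in $\varphi_i(U_i)$), Alice wins outright. Your proof never observes this, and without it the ``may assume'' step is unjustified. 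Second, even granting that the balls shrink, your choice of $i$ is made too early: you fix $i$ so that $\varphi^{-1}(\mathrm{center}(B_1))\in U_i$, but Bob controls the centers of later balls, which can drift anywhere in $B_1$; by the time the balls are small enough they may well sit far from the original center and outside $\varphi_\alpha(U_i)$. The paper avoids this by instead taking a Lebesgue number $\delta>0$ for the cover of the compact ball $B_1$ by the open sets $\varphi_\alpha(U_i)$, and then choosing $n$ and $i$ \emph{after} the ball has diameter below $\delta$, so that $\varphi_\alpha^{-1}(B_n)\subset U_i$ for that later $n$. Once that containment is secured, your transfer via $\psi=\varphi_i\circ\varphi_\alpha^{-1}$ and Proposition \ref{properties}(d) is exactly what the paper does (applying Alice's strategy as though $B_n$ were Bob's opening ball). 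So the approach is the same, but you need to add the density case and replace the premature choice of $i$ with a Lebesgue-number argument.
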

\begin{proof}
The direction $\Longrightarrow$ is immediate from the definitions,
since Bob may select to work with each of the charts $\varphi_i$ on
his first move. For
the direction $\Longleftarrow$, note first that by Proposition
\ref{properties}(a,b), $\varphi_i(S)$ is dense in each $\varphi_i(U_i)$
and hence $S$ is dense in $X$. 
Now suppose Bob chose to work in some
chart $\varphi_\alpha$ distinct from the $\varphi_i$. If the diameters
of the balls
$B_n$ chosen by Bob do not decrease to zero, then $\bigcap B_n$ has
interior. Since
$S$ is dense in $X$, each $\varphi_\alpha(S)$ is dense in
$\varphi_\alpha(U_\alpha)$, so Alice wins. Otherwise, since $B_1$ is compact and is
covered by the open sets $\varphi_\alpha(U_i)$, there is a Lebesgue
number for this cover, that is $\delta>0$ such that each subset of
$B_1$ of diameter at most $\delta$ is contained in one of the
$\varphi_\alpha(U_i)$. Thus there are $n, i$ such
that 
\eq{eq: containment}{\varphi_{\alpha}^{-1} (B_n) \subset U_i.}

In light of Proposition 
\ref{properties}(d), $\varphi_\alpha(S) = \varphi_\alpha \circ
\varphi^{-1}_i \big(\varphi_i(S)\big)$ is HAW
on $\varphi_\alpha(U_\alpha \cap U_i)$. In light of \equ{eq:
  containment}, the latter set contains Bob's choice $B_n$ and so
(applying her strategy for the case that $B_n$ is the first ball
chosen by Bob), Alice wins in this case as well. 
\end{proof}

\smallskip

Note that our definition above depended on the choice of atlas
$\mathcal{A}$. We now deduce from Proposition \ref{prop: does not
  depend on chart} that the HAW property in fact depends only on the
manifold structure on $X$, and not on a specific atlas.   
Namely, recall that two atlases of charts $\mathcal{A}_1 \df (U_\alpha, \varphi_\alpha)$, $ \mathcal{A}_2 \df (V_\beta, \psi_\beta)$ on the same manifold $X$ are said to be {\em $C^1$-compatible} if $\mathcal{A}_1 \cup \mathcal{A}_2$ is also a $C^1$-atlas of in the above sense. 
\begin{cor}\name{cor: does not depend on atlas}
Suppose $\mathcal{A}_1, \mathcal{A}_2$ are two compatible atlases of charts, and $S \subset X$. Then 
$S$ is HAW on $(X, \mathcal{A}_1)$ if and only if it is HAW on $(X, \mathcal{A}_2)$. 
\end{cor}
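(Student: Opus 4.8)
The plan is to deduce Corollary \ref{cor: does not depend on atlas} directly from Proposition \ref{prop: does not depend on chart} by exploiting the fact that $C^1$-compatibility of $\mathcal{A}_1$ and $\mathcal{A}_2$ means $\mathcal{A}_1 \cup \mathcal{A}_2$ is itself a $C^1$-atlas on $X$. So the first step is to set $\mathcal{A} \df \mathcal{A}_1 \cup \mathcal{A}_2$ and observe that, since $\mathcal{A}_1$ already covers $X$, we may regard the charts of $\mathcal{A}_1$ as a system of coordinate charts in $\mathcal{A}$ whose domains cover $X$; similarly for $\mathcal{A}_2$.

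Next I would apply Proposition \ref{prop: does not depend on chart} twice, once with the distinguished system of charts being $\mathcal{A}_1$ and once with it being $\mathcal{A}_2$. The first application gives: $S$ is HAW on $(X, \mathcal{A})$ if and only if for each chart $(U_\alpha, \varphi_\alpha) \in \mathcal{A}_1$, $\varphi_\alpha(S)$ is HAW on $\varphi_\alpha(U_\alpha)$; but the latter condition is, by definition (taking the one-chart atlas on $\varphi_\alpha(U_\alpha)$ as in the remark following the definition), precisely the statement that $S$ is HAW on $(X,\mathcal{A}_1)$. Hence $S$ is HAW on $(X,\mathcal{A})$ iff $S$ is HAW on $(X,\mathcal{A}_1)$. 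The second application, with the system of charts being $\mathcal{A}_2$, gives in the same way that $S$ is HAW on $(X,\mathcal{A})$ iff $S$ is HAW on $(X,\mathcal{A}_2)$. Combining the two equivalences yields that $S$ is HAW on $(X,\mathcal{A}_1)$ iff $S$ is HAW on $(X,\mathcal{A}_2)$, which is the claim.

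There is no real obstacle here; the only point requiring a moment's care is to make sure the quantifier in Proposition \ref{prop: does not depend on chart} is being read correctly — namely that ``$\varphi_i(S)$ is HAW on $\varphi_i(U_i)$ for each $i$'' is exactly the unpacked definition of ``$S$ is HAW on $(X, \mathcal{A}_j)$'', so that both directions of the biconditional in the Proposition are genuinely being used. One should also note explicitly that $\mathcal{A}_1$ and $\mathcal{A}_2$ are each subcollections of the atlas $\mathcal{A}$ covering $X$, which is what licenses applying the Proposition with $\mathcal{A}$ in the role of the ambient atlas and $\mathcal{A}_j$ in the role of the covering system of charts.
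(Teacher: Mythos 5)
Your proof is correct and takes essentially the same route as the paper: the paper passes through the maximal atlas $\mathcal{A}_{\max}$ while you pass through $\mathcal{A}_1 \cup \mathcal{A}_2$, but in both cases the engine is Proposition \ref{prop: does not depend on chart} applied with the enlarged atlas as the ambient one and $\mathcal{A}_1$ (resp.\ $\mathcal{A}_2$) as the covering system of charts, together with the observation that ``$\varphi_i(S)$ is HAW on $\varphi_i(U_i)$ for every chart of $\mathcal{A}_j$'' is by definition the statement that $S$ is HAW on $(X, \mathcal{A}_j)$.
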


\begin{proof}
The atlases $\mathcal{A}_1, \mathcal{A}_2$ are compatible if and only if the maximal atlas (with respect to inclusion) $\mathcal{A}_{\max}$ containing $(U_\alpha, \varphi_\alpha)$ coincides with the maximal atlas containing $(V_\beta, \psi_\beta)$. So it is enough to show that $S$ is HAW on $(X, \mathcal{A}_1)$ if and only if it is HAW on $(X, \mathcal{A}_{\max})$. For this, apply  
Proposition \ref{prop: does not depend on chart} with $\mathcal{A}=\mathcal{A}_{\max}$ and $\{(U_i, \varphi_i)\}=\mathcal{A}_1$. 
\end{proof}

In light of Corollary \ref{cor: does not depend on atlas}, we will be
justified below in omitting the atlas from the terminology and  saying
that $S\subset X$ is {\em HAW\/} if it is HAW on $(X, \mathcal{A})$
for some atlas of charts $\mathcal{A}$ defining the manifold structure
on $X$.  
It is clear that Proposition \ref{properties} immediately implies

\begin{prop}\name{manifoldproperties} Let $X$ be a $C^1$ manifold. Then:
\begin{itemize}
\item[(a)]  HAW subsets of $X$ are thick. 
\item[(b)]  The countable intersection of  HAW subsets  of $X$ is again  HAW.
\item  [(c)]
The  image of a HAW 
subset  of $X$ under a $C^1$ diffeomorphism  $X\to X$  is HAW.
\end{itemize}\end{prop}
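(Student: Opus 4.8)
The plan is to reduce each of the three statements directly to the corresponding item of Proposition \ref{properties} via the characterization of HAW established in Corollary \ref{cor: does not depend on atlas}, choosing a convenient atlas in each case. For part (a), I would fix any atlas $\mathcal{A} = (U_\alpha, \varphi_\alpha)$ defining the manifold structure on $X$ and suppose $S$ is HAW on $(X,\mathcal{A})$. By Proposition \ref{prop: does not depend on chart}, $\varphi_\alpha(S)$ is HAW on $\varphi_\alpha(U_\alpha) \subseteq \R^d$ for every $\alpha$; by Proposition \ref{properties}(a,b) this set is thick in $\varphi_\alpha(U_\alpha)$, i.e.\ its intersection with every nonempty open subset of $\varphi_\alpha(U_\alpha)$ has full Hausdorff dimension $d$. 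Since the $\varphi_\alpha$ are homeomorphisms and (being $C^1$) locally bi-Lipschitz on compact sets, they preserve the property of having full Hausdorff dimension; as the $U_\alpha$ cover $X$, it follows that $S\cap O$ has full dimension for every nonempty open $O\subseteq X$, which is what it means for $S$ to be thick in the manifold $X$.

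For part (b), let $S_1, S_2, \dots$ be HAW subsets of $X$ and again fix one atlas $\mathcal{A}$. For each chart $(U_\alpha,\varphi_\alpha)$, each $\varphi_\alpha(S_j)$ is HAW on $\varphi_\alpha(U_\alpha)$ by Proposition \ref{prop: does not depend on chart}. Here I need a version of Proposition \ref{properties}(c) for HAW subsets of an open set $W\subseteq\R^d$ rather than all of $\R^d$ — but this is immediate, since the game on $W$ is literally the game on $\R^d$ with the sole extra constraint that Bob's first ball lie in $W$, so Alice's strategies for the unrestricted game (intersected appropriately) still apply. Thus $\bigcap_j \varphi_\alpha(S_j) = \varphi_\alpha\!\big(\bigcap_j S_j\big)$ is HAW on $\varphi_\alpha(U_\alpha)$ for every $\alpha$, and applying Proposition \ref{prop: does not depend on chart} in the reverse direction gives that $\bigcap_j S_j$ is HAW on $(X,\mathcal{A})$, hence HAW.

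For part (c), let $f\colon X\to X$ be a $C^1$ diffeomorphism and $S\subseteq X$ be HAW. Fix an atlas $\mathcal{A} = (U_\alpha,\varphi_\alpha)$ and consider the pulled-back atlas $f^*\mathcal{A} \df (f^{-1}(U_\alpha), \varphi_\alpha\circ f)$, which is a $C^1$ atlas defining the same manifold structure on $X$ because its transition maps $(\varphi_\beta\circ f)\circ(\varphi_\alpha\circ f)^{-1} = \varphi_\beta\circ\varphi_\alpha^{-1}$ coincide with those of $\mathcal{A}$. The charts of $f^*\mathcal{A}$ send $f^{-1}(S)$ to $\varphi_\alpha(S)$, which is HAW on $\varphi_\alpha(U_\alpha)$ since $S$ is HAW on $(X,\mathcal{A})$; hence $f^{-1}(S)$ is HAW on $(X, f^*\mathcal{A})$, and by Corollary \ref{cor: does not depend on atlas} it is HAW. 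Replacing $f$ by $f^{-1}$ shows $f(S)$ is HAW. (Alternatively one could work chart-by-chart and invoke Proposition \ref{properties}(d) for the transition maps, but the pullback-atlas argument avoids bookkeeping.)

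The only real subtlety — the "main obstacle," though a mild one — is making sure the ingredients of Proposition \ref{properties}, stated for target sets in all of $\R^d$, transfer to target sets in an open subset $W\subseteq\R^d$: for (a) this is just that full Hausdorff dimension is a local, bi-Lipschitz-invariant notion; for (b) and (d) one checks that restricting Bob's opening move to $W$ does not interfere with Alice's strategies. Once this is observed, all three parts are formal consequences of Proposition \ref{prop: does not depend on chart} and Corollary \ref{cor: does not depend on atlas}, as the statement of Proposition \ref{manifoldproperties} already asserts ("It is clear that \dots").
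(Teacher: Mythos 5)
Your proof is correct and spells out what the paper leaves implicit: the paper offers no argument at all, stating only that Proposition \ref{manifoldproperties} follows ``clearly'' from Proposition \ref{properties} together with the chart-independence results, and your write-up is precisely the natural unpacking of that claim via Proposition \ref{prop: does not depend on chart} and Corollary \ref{cor: does not depend on atlas}.

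Two remarks on the details. First, you correctly flag the one real point that is being elided, namely that the items of Proposition \ref{properties} are stated for target sets in all of $\R^d$ while the chart images $\varphi_\alpha(U_\alpha)$ are merely open. For (b) this is harmless: the standard simultaneous-play argument for countable intersections of HAW sets never needs Bob's balls to exhaust $\R^d$, so it works verbatim on any open $W$. For (a) the extension requires a word more than you give: one needs that the implication ``HAW on $W$'' $\Rightarrow$ ``winning on $W$'' $\Rightarrow$ ``thick in $W$'' goes through with $W$ in place of $\R^d$, which it does because both Schmidt's game and the full-dimension estimate are played entirely inside a single initial ball $B_1 \subset W$, so nothing outside $W$ is ever consulted. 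Once that is said, your bi-Lipschitz transfer of full Hausdorff dimension through the $C^1$ charts is the right way to finish.

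Second, your pullback-atlas argument for (c) is a cleaner organization than invoking Proposition \ref{properties}(d) chart-by-chart, since it routes the diffeomorphism entirely through Corollary \ref{cor: does not depend on atlas} and never has to track which hyperplane neighborhoods map to what under the transition maps. The two routes are equivalent in content, but yours makes it transparent that the only input needed about $f$ is that $f^{*}\mathcal{A}$ is $C^1$-compatible with $\mathcal{A}$.
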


\smallskip
Now, and for the rest of the paper, let $X$ be as in \equ{defX} and $F$ as in \equ{defF}. 
Also
define $F^+ \df \{g_t : t \ge 0\}$. 
It turns out that Theorem \ref{dyn countable} can be reduced to the
following statement  about one-sided orbits:

 \begin{thm}\name{onesided HAW}  \begin{itemize}
\item[(a)]  The set
\eq{result1}{
\big\{x\in X : F^+x \text{ is bounded}\big \}}
is HAW:
\item[(b)]  
let $Z$ be a compact $(F,H^+)$-transversal  submanifold  of $X$; then the set
\eq{result2}{
\big\{x\in X :  \overline{F^+x}\cap Z  = \varnothing  \big\}}
is  HAW.
\end{itemize} \end{thm}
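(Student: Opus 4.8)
The plan is to establish both parts of Theorem \ref{onesided HAW} by playing the hyperplane absolute game in a suitable coordinate chart around each of Bob's moves, using the geometry of the $F^+$-action and the expanding horospherical subgroup $H^+$. For part (a), the key dynamical input is the standard dichotomy coming from nondivergence: the complement of the bounded-orbit set, namely $\{x : F^+x \text{ is unbounded}\}$, is the set of $x$ whose forward orbit escapes every compact set, i.e.\ $\bigcup_{K} \{x : F^+x \cap (X\setminus K) \text{ is "eventually" large}\}$. Rather than attack this directly, I would use the well-known fact (going back to Dani, and exploited in \cite{KM, KW3}) that points with bounded $F^+$-orbit can be detected by controlling, at each scale, the shortest nonzero vector in the lattice. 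The idea is: when Bob presents a ball $B_n$ at scale $r_n$, Alice uses one move to remove a neighborhood of the hyperplane (in the local $H^+ \times F$-type coordinates) along which the relevant lattice vector would be contracted to near-zero length by the flow; since the $H^+$-direction is expanding under $g_t$ and transversal to $F$, this is genuinely a hyperplane condition and fits the HAW framework. Summing a geometric series over scales, Alice forces the limit lattice to have a uniform lower bound on its systole along the whole forward orbit, hence $F^+x$ bounded.

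For part (b), I would argue similarly but now the "bad" set to avoid at each scale is a neighborhood of $Z$ pushed backward by the flow: $x \in \overline{F^+x}\cap Z \neq \varnothing$ roughly means $g_{t}x$ comes close to $Z$ for arbitrarily large $t$, i.e.\ $x \in g_{-t}(\text{nbhd of } Z)$ for large $t$. The $(F, H^+)$-transversality hypothesis is exactly what guarantees that, in appropriate coordinates near a point of $B_n$, the set $g_{-t}(Z)$ for the relevant range of $t$ looks like (a uniformly bounded family of) hypersurfaces whose tangent spaces stay uniformly transversal to the $H^+$-direction — so a bounded neighborhood of them is contained in a bounded-thickness neighborhood of an affine hyperplane in the chart, which is a legal Alice move. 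Compactness of $Z$ is used to get these transversality and curvature bounds uniform. One then again sums over scales.

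The common technical engine for both parts is a lemma of the following shape, which I would state and prove once: given the expanding direction $H^+$, there are constants so that for any ball $B$ of radius $r$ in a chart and any "obstacle" that in the chart is contained in the $cr$-neighborhood of a $C^1$ hypersurface transversal (with uniform angle bound) to the $H^+$-direction, Alice can, in one move, choose a hyperplane-neighborhood of radius $\le \beta r$ containing that obstacle, and Bob's next ball then avoids it while still having radius $\ge \beta r'$ — provided $\beta$ is small enough depending only on the uniform constants. This is really where the $1/3$ threshold and the "do a change of coordinates, invoke Proposition \ref{properties}(d)" machinery of \S\ref{games} earns its keep; Remark \ref{remark: radii} lets us assume $r_n \to 0$, so only the \emph{local} (first-order) transversality matters and higher-order terms are absorbed. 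Then: for (a) apply the lemma with the obstacle being the set of $x \in B_n$ whose orbit would, at the scale-appropriate time, develop a vector shorter than a fixed $\varepsilon$; for (b) apply it with the obstacle $g_{-t_n}(Z) \cap B_n$ for the time $t_n$ at which $g_{t_n}(B_n)$ has unit size.

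The main obstacle I anticipate is \textbf{not} the HAW bookkeeping but rather verifying that the obstacle sets really are, at the relevant scales, neighborhoods of uniformly-transversal hypersurfaces. For (a) this requires understanding how the "short vector" condition propagates: a priori the set of bad $x$ in $B_n$ could be a union of many such slabs (one per candidate lattice vector), and one must check that at each scale only one slab is active (this is the analogue of the fact that in Schmidt-game proofs of badly-approximable-type statements, at each scale there is essentially a unique dangerous rational), and that its normal direction is controlled by the expanding direction. For (b), the subtlety is that $g_{-t_n}$ expands $Z$ in the $H^-$-direction, so one must confirm that this expansion does not destroy transversality to $H^+$ — here is exactly where one needs $(F,H^+)$-transversality of $Z$ rather than of $g_{-t_n}Z$, and a short computation with $\Ad g_{t_n}$ showing that the $H^+$-component of $T_x(g_{-t_n}Z)$ stays bounded below. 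Once these geometric facts are pinned down with constants uniform over $B_1$ (using compactness of $Z$ and of the relevant region of $X$, or a covering argument since $X$ is not compact), the HAW conclusion follows by the scale-by-scale strategy described above, and Theorem \ref{dyn countable} is recovered by combining (a), (b), Proposition \ref{manifoldproperties}(b), and the observation that $Fx$ bounded $\iff$ both $F^+x$ and $F^-x$ bounded (applying (a) to $F^-$ via the symmetry $g_t \mapsto g_{-t}$).
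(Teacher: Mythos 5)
Your proposal correctly identifies the overall shape of the argument (play the hyperplane game in charts, use $(F,H^+)$-transversality to make the pulled-back copies of $Z$ look like hyperplane-neighborhoods transversal to $\h$, and observe that $\Ad_{g_{-t}}$ can only improve transversality to $\h$). For part (a) you take a genuinely different route than the paper: you propose re-deriving the bounded-orbit statement from scratch via a short-vector/systole argument on $H^+$-leaves, whereas the paper simply quotes McMullen's Theorem \ref{bad winning} and then transports the strategy from $\h$ to the full chart by projecting along the $H^-F$-foliation (using \equ{stability} and Proposition \ref{properties}(d) to straighten the foliation). Both can work; the paper's is shorter because the one-dimensional statement is already in the literature.

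For part (b), however, there is a genuine gap. You propose to treat ``the'' obstacle at scale $r_n$ as a single slab $g_{-t_n}(Z)\cap B_n$ and remove it with a single hyperplane move. But the threatening times $t$ are not uniquely determined by the scale. After the reduction to the discrete semigroup $F_\tau^+$ (Lemma \ref{transversality2}, which you do not mention but which is needed to make $\zt$ an $H^+$-transversal hypersurface), the obstacle $g_{-k\tau}(\zt^{(\vre)})$ has $\h$-thickness $\sim e^{-2k\tau}\vre$. The times $k$ whose obstacle is thin enough to be a legal Alice move yet still potentially dangerous within a stage spanning a radius factor of $\beta^n$ number roughly $m \sim n\log(1/\beta)/(2\tau)$; in particular $m > n$ whenever $\beta < e^{-2\tau}$ (which is forced, since $\tau$ must be small for Lemma \ref{transversality2} and \equ{lipschitz}). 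An ordinary HAW strategy offers only one hyperplane per move, hence at most $n$ removals per stage, and Alice cannot dispose of all $m$ obstacles. This is the central difficulty, and the paper resolves it by switching to the \emph{hyperplane percentage game}: Alice presents all $m$ candidate hyperplane-neighborhoods at the start of a stage, Bob must avoid at least half of the remaining ones at each move, and after $n = \lfloor\log_2 m\rfloor+1$ moves none survive; Lemma \ref{percentage} (from \cite{BFS}) then converts the HPW conclusion back to HAW. Without this (or some equivalent device), the scale-by-scale strategy you describe does not close. Everything else in your outline, including the derivation of Theorem \ref{dyn countable} from (a), (b), and Proposition \ref{manifoldproperties}(b), matches the paper.
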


 \begin{proof}[Proof of Theorem \ref{dyn countable} assuming Theorem
   \ref{onesided HAW}] It is clear that a statement analogous to
   Theorem  \ref{onesided HAW} holds for the semigroup $F^- \df \{g_t
   : t \le 0\}$ in place of $F^+$ and with the roles of $H^+$ and
   $H^-$ exchanged: namely, the sets \eq{result3}{ 
\big\{x\in X : F^-x \text{ is bounded}\big \}}
and 
\eq{result4}{
\big\{x\in X :  \overline{F^-x}\cap Z  = \varnothing  \big\}\,,}
where $Z$ is a   compact $(F,H^-)$-transversal  submanifold  of $X$,
are  HAW.
 The set \eq{result0}{\big\{x\in X : Fx \text{ is bounded and }
  \overline{Fx}\cap Z  = \varnothing  \big\}}
  is the intersection of sets \equ{result1}--\equ{result4}; hence, in
  view of Proposition \ref{manifoldproperties}(b), it is HAW whenever 
   $Z\subset X$ is    compact  and both $(F,H^+)$- and
   $(F,H^-)$-transversal. In Theorem \ref{dyn countable} our set $Z$
   is a countable union of manifolds, and hence (replacing if
   necessary a manifold with a countable union of compact manifolds
   with boundary) a countable union of
   compact manifolds with boundary. Thus the set \equ{result} is the
   countable intersection of sets of the form \equ{result0}, and   
Theorem \ref{dyn countable} follows by another application of
Proposition \ref{manifoldproperties}(b). 
  \end{proof}

 
\ignore{
Reduction of to

 \begin{thm}\name{ba winning}  There exists $\alpha > 0$ such that for any $x\in X$ and any $a\in \R$, the set  \comdima{maybe we should use $\Lambda$ instead of $x$ everywhere}
$$
\big\{g\in G :  a\notin \overline{Q_0(x\nz)} \}$$ 
 is $\alpha$-winning.
 \end{thm}

Likewise, reduction of Theorem \ref{dyn countable} to

 \begin{thm}\name{dyn winning} There exists $\alpha > 0$ such that:
 \begin{itemize}
 \item[\rm (i)] the set
$ \big\{x\in X : Fx \text{ is bounded } \} $  is $\alpha$-winning;
 \item[\rm (ii)]  for any compact  submanifold $Z$ of $X$ which is both $(F,H^+)$-transversal and $(F,H^-)$-transversal, the set \eq{escaping z}{\big\{x\in X :  \overline{Fx}\cap Z  = \varnothing\}} 
 is $\alpha$-winning.
 \end{itemize}
  \end{thm}
\comdima{the logical chain of all the theorems is not clear to me at the moment.}
which, as we show later, implies Theorem \ref{ba winning}. Note that
even part (i) of the Theorem is formally new, it has never been stated
that way (with the game played on $X$).

}

 
 We will finish this section by proving part (a) of the above theorem, which
 is in fact a rather straightforward variation on some well-known
 results. In most of the earlier work concerning winning properties of
 sets of bounded trajectories, the games were actually played on
 expanding leaves 
for the $F^+$-action on $X$,  which in our case can be parametrized as orbits of
the expanding horospherical group $H^+$. 
An example is McMullen's
strengthening \cite[Theorem 1.3]{Mc} of a theorem of Dani
\cite{Dani-rk1} on the winning property of the set of directions in
hyperbolic manifolds of finite volume with bounded geodesic rays, a
special case of which can be restated as follows:

 \begin{thm}\name{bad winning} For any $y\in X$, the set 
\eq{diophset}{\big\{s\in \R :  {F^+h_sy}\text{ is bounded}\}}
 is absolutely winning (which, for games played on $\R$, is the same as HAW).
  \end{thm}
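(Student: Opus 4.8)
The plan is to deduce Theorem \ref{bad winning} from the existing literature by reduction to a known homogeneous-dynamics statement. The set in \equ{diophset} is, up to the standard correspondence between the geodesic flow on the unit tangent bundle of $X$ and the $F^+$-action, precisely the set of directions $s$ at the point (or horocycle) $y$ for which the forward geodesic ray stays in a compact part of $X$. McMullen's \cite[Theorem 1.3]{Mc} asserts exactly that this set of directions is absolutely winning (he proves it for finite-volume hyperbolic manifolds and more general rank-one settings), which strengthens Dani's earlier result \cite{Dani-rk1} that the set is $\alpha$-winning for Schmidt's game. So the bulk of the work is a bookkeeping translation: identify the $H^+$-orbit $\{h_s y : s \in \R\}$ with the relevant transversal (an unstable/expanding leaf for $g_1$), check that the metric induced from the Lie group coincides up to bi-Lipschitz equivalence with the metric used in \cite{Mc}, and note that absolute winning is preserved under bi-Lipschitz maps of $\R$ (Proposition \ref{properties}(d)), so the precise choice of parametrization of $H^+$ is irrelevant.

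First I would recall that $X = \SL_2(\R)/\SL_2(\Z)$ is the unit tangent bundle of the modular surface $\mathbb{H}/\SL_2(\Z)$, with the $F^+$-action corresponding to the forward geodesic flow; boundedness of $F^+ h_s y$ in $X$ is then equivalent to the forward geodesic ray in the direction determined by $h_s y$ being bounded in the modular surface. Second, I would observe that the expanding horospherical group $H^+$ from \equ{horosph} parametrizes the expanding leaf of the geodesic flow through $y$, which under the above identification is the boundary circle $\partial \mathbb{H}$ minus one point, i.e.\ an interval in $\R$; thus the map $s \mapsto (\text{endpoint of the geodesic ray through } h_s y)$ is a $C^1$ (in fact real-analytic) diffeomorphism onto an open subset of $\R$. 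Third, I would invoke McMullen's theorem \cite[Theorem 1.3]{Mc}: the set of such endpoints corresponding to bounded rays is absolutely winning. Finally, pulling back along the diffeomorphism above and using Proposition \ref{properties}(d), the set \equ{diophset} is absolutely winning, which for a target set in $\R$ is the same as HAW (as noted after the definition of HAW).

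The main obstacle, such as it is, is not a deep one: it is the verification that what is proved in \cite{Mc} — phrased in terms of geodesic flows on negatively curved manifolds and the visual boundary — matches our group-theoretic formulation, and in particular that the notion of ``bounded geodesic ray'' in \cite{Mc} corresponds exactly to ``$F^+ h_s y$ bounded in $X$'' under Mahler's criterion. One should also double-check that McMullen states his result for an arbitrary basepoint $y$ (equivalently, an arbitrary horocycle), not just a fixed reference point; this is the case, since the set of bounded directions seen from different basepoints differ by a quasi-isometry of the boundary, and absolute winning is a quasi-isometry invariant on $\R$. Since an alternative is available — one could instead cite Dani's $\alpha$-winning result combined with the Kleinbock–Margulis mixing technique, or re-derive the statement by a direct Schmidt-game argument on the expanding horocycle — I regard the citation route via \cite{Mc} as the cleanest, and the only genuine labor is making the dictionary between the two vocabularies precise.
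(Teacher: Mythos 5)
Your approach matches the paper's exactly: the paper states Theorem \ref{bad winning} without a separate proof, recording it as a restatement of McMullen's \cite[Theorem 1.3]{Mc} in the language of $\SL_2(\R)/\SL_2(\Z)$, and the dictionary you spell out (geodesic flow on the modular surface versus the $F^+$-action, the expanding horocycle parametrizing the visual boundary, and $C^1$-diffeomorphism invariance of absolute winning from Proposition \ref{properties}(d)) is precisely the implicit translation. Two peripheral remarks are slightly off --- absolute winning is preserved under bi-Lipschitz and $C^1$ maps but not under general quasi-isometries of $\R$, and the alternative route via Dani \cite{Dani-rk1} together with Kleinbock--Margulis \cite{KM} would yield ordinary Schmidt winning rather than the stronger absolute winning needed here --- but neither caveat touches your main argument, which is sound.
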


To reduce Theorem
   \ref{onesided HAW}(a) to Theorem
   \ref{bad winning}, let us fix an atlas of coordinate charts for $X$ as follows. Let $\mathfrak g \df \Lie(G)$, 
 and for any $y\in X$ denote by $\exp_y$ the map \eq{defexp}{\exp_y: \g\to X,\quad
 \x \mapsto\exp(\x)
 y\,.} 
 For any $y\in X$ one can choose  a neighborhood $W_y$ of $0\in \g$ 
 such that $\exp_y|_{W_y}$ is
 one-to-one. Denote \eq{defatlas}{U_y \df \exp_y(W_y)\text{ and }\varphi_y \df
 \exp_y^{-1}|_{U_y}\,.} The collection $\{(U_y,\varphi_y): y\in X\}$
 is the atlas that we are going to use.  

 \begin{proof}[Proof of Theorem \ref{onesided HAW}(a)]  
  In view of
Proposition \ref{prop: does not depend on chart} it suffices to show that
for any $y\in X$, the set \eq{bdd orbits}{\varphi_y\big(\big\{x\in U_y
  : F^+x \text{ is bounded}\big \}\big) = \{\x\in W_y:
  F^+\exp(\x)y \text{ is bounded}\big \}} is HAW on $W_y$. We know
from Theorem \ref{bad winning} that the set 
 \equ{diophset}
 is HAW. 
 Note that conjugation by $g_t$, $t\ge 0$, does not expand
 elements of $H^-F$. Therefore, for any $x \in X$,
\eq{stability}{F^+x\text{ is bounded  } \iff\ F^+ gx\text{ is bounded   }\forall\,g\in H^-F\,.} 
The set $U_y$ is foliated by connected components of orbits for the
action of $H^-F$. 
By composing $\varphi_y$ with a suitable diffeomorphism of $W_y$, 
which we are allowed to do by Proposition \ref{properties}(d), we can
assume that this foliation is mapped into the foliation of $\mathfrak{g}$
by translates of $\mathfrak
p \df \Lie(H^-) \oplus \Lie (F)$. Let us denote by $\mathfrak{h}$ the Lie
algebra of $H^+$, by $\pi: \g \to \h$
the 
projection associated with the direct sum
decomposition $\mathfrak g = \mathfrak h \oplus \mathfrak p$, and let $W^+_{y}\df \pi(W_y)$.
\smallskip

 Fix $0 < \beta < 1/3$. Bob begins with
a ball $B_1 \subset W_y$, and Alice consults the strategy
she is assumed to have for playing on $W^+_{y}$  for
the chosen value of $\beta$ and 
taking Bob's first move to be $B'_1 \df \pi(B_1)$. The strategy
specifies an interval (neighborhood of a point)   
$A'_1 \subset \h$, and in the game on $W_y$ Alice chooses $A_1
\df \pi^{-1}(A'_1)$, which is a hyperplane neighborhood. Continuing iteratively, suppose Bob has chosen
the ball $B_i\subset W_y$. The ball $B_i' \df \pi(B_i)$ is a legal move
for Bob playing on $W^+_{y}$, since the projection $\pi$ does not affect the 
radii of balls and since 
$$B_i \subset B_{i-1} \sm A_{i-1} \implies B'_i \subset B'_{i-1} \sm
A'_{i-1}\,.$$
Thus Alice's strategy  for playing on $W^+_{y}$ specifies a move $A'_i
\subset \h$, and in the new game 
Alice chooses $A_i \df \pi^{-1}(A'_i)$. This defines
her strategy for playing on $W_y$ and guarantees that  $\bigcap B'_i$  belongs to  the set 
 \equ{diophset}.  By \equ{stability}, the point
$\bigcap B_i$  belongs to  the set \equ{bdd orbits}. 
 \end{proof}

\section{
Transversality and reduction to discrete time actions
}\name{slicing}
It would seem natural to attempt to prove  an analogue of Theorem
   \ref{bad winning} for orbits escaping $Z$, that is, show that  for $Z$ as in Theorem
\ref{onesided HAW}(b) and $y\in X$, the set 
${\big\{s\in \R :   {F^+h_sy}\cap Z = \varnothing\big\}}
$ is HAW. And indeed the above statement is true; however it would not be
enough for proving  Theorem 
\ref{onesided HAW}(b). A reason for that is that one can state a
version of the equivalence \equ{stability} for this situation, namely,
that  
$$\overline{F^+x}\cap Z = \varnothing \  \iff\ \overline{F^+ \exp(\p)x}\cap Z = \varnothing \ \ \forall\,\p
\in \mathfrak{p} \text{ with }\|\mathfrak{p}\| \le \vre\,,$$
where $\vre> 0$ depends on $x$. However to derive a winning property
of the set \equ{result2} one would need a uniform lower bound on $\vre$,
which is not available here. Thus we will need to play on $X$ itself.  

\ignore{
 $\vre$ in the right hand side is not uniform, which 

For $\rho > 0$ say that $S \subset \R^d$ is   {\sl $(\beta;\rho)$-HAW\/}  if
Alice has a winning strategy in the $\beta$-hyperplane absolute game 
regardless of how Bob chooses to play; as long as his first ball has radius at least $\rho$. Clearly 
$S$ is $\beta$-HAW if and only if it is $(\beta;\rho)$-HAW for every $\rho > 0$. Note that the class of 
$(\beta;\rho)$-HAW sets is not closed under countable intersections. See \cite{Akh} for a similarly defined class of sets in the context of regular Schmidt's game.

 In what follows, for a subset $Z$ of $X$ and $\vre > 0$  let us denote by $Z^{(\vre)}$ the $\vre$-neighborhood of $Z$. The following is a needed analogue of Theorem \ref{bad winning} to the case of orbits escaping $Z$:

  \begin{thm}\name{h winning} For any compact $(F,H^+)$-transversal
    submanifold $Z$ of $X$, any $\rho > 0$   and any $0 < \beta < 1/3$ there exists
    $\vre > 0$ such that for any $y\in X$, the set  
\eq{goodset}{\big\{s\in \R :   {F^+h_sy}\cap Z^{(\vre)}  = \varnothing\big\}}
 is $(\beta,\rho)$-HAW.
  \end{thm}

Note a few differences between the above statement and Theorem
\ref{onesided HAW}: the game is played on the subgroup $H^+$, the
                                conclusion of the theorem involves $(\beta;\rho)$-HAW property, 
and  we are choosing the size of a  neighborhood of $Z$ we want to avoid, dependent on  $Z$, $\rho$ and $\beta$ and uniform  in $y\in X$.

\smallskip

\ignore{

\begin{prop}\name{prop: slicing}
Let $\beta \in (0, 1/3), \, d_1 +d_2=d$,  suppose  that $W_i \subset
\R^{d_i}$ is open, and let $W \df W_1 \times W_2$. Then, 
if $S \subset \R^{d_1}$ is $\beta$-HAW on $W_1$ then $S \times W_2$ is $\beta$-HAW on
$W$.  

\end{prop}
\begin{proof}
Let $\pi: \R^d \to \R^{d_1}$ be the
projection onto the first factor associated with the direct sum
decomposition $\R^d = \R^{d_1} \oplus \R^{d_2}$. Play begins with
a ball $B_1 \subset W$ chosen by Bob, and Alice consults the strategy
she is assumed to have for playing on $W_1$ with target set $S$, for
the given value of $\beta$ and 
taking Bob's first move to be $B'_1 \df \pi(B_1)$. The strategy specifies a
hyperplane neighborhood 
$A'_1 \subset \R^{d_1}$, and in the game on $W$, Alice chooses $A_1
\df \pi^{-1}(A'_1)$. Continuing iteratively, suppose Bob has chosen
the ball $B_i\subset W$. The ball $B_i' \df \pi(B_i)$ is a legal move
for Bob playing on $W_1$, since the projection $\pi$ does not affect the 
radii of balls and since 
$$B_i \subset B_{i-1} \sm A_{i-1} \implies B'_i \subset B'_{i-1} \sm
A'_{i-1}\,.$$
Thus Alice's strategy  for playing on $W_1$ specifies a move $A'_i
\subset \R^{d_1}$, and in the new game 
Alice chooses $A_i \df \pi^{-1}(A_i)$. This defines
her strategy for playing on $W$. Since $S$ is $\beta$-HAW, the point $\bigcap B'_i$ will
belong to $S$. Hence $\pi \left(\bigcap_i B_i \right) \cap S\ne \varnothing$
which implies $\bigcap_i B_i \cap (S \times W_2) \ne \varnothing$. 
\end{proof}

\begin{cor}\name{cor: foliation} \comdima{I am not sure we need this corollary..}
Suppose $U \subset \R^d$ is an open ball and for $i=1,2$ we have a
smooth diffeomorphism $\varphi_i: (0,1)^d \to U$, and numbers $d_1,
d_2 \in \{1, \ldots, n-1\}$ and $S_i \subset (0,1)^{d_i}$, such that
$S_i$ is HAW in $(0,1)^{d_i}$. For $\ell=1,2$, let 
$$
\til S_\ell \df \{\varphi_\ell(x_\ell, y_\ell): x_\ell \in S_\ell, y_\ell \in (0,1)  \}. 
$$
Then $\til S_1 \cap \til S_2$ is HAW in $U$. 
\end{cor}
\begin{proof}
By Proposition \ref{properties}(c) it suffices to show that each
$\til S_\ell$ is HAW. By Proposition \ref{prop: slicing}, $S_\ell \times
(0,1)^{d-d_\ell}$ is HAW in $(0,1)^d$ and so the claim follows from
Proposition \ref{properties}(e).  
\end{proof}

}

 \begin{proof}[Proof of Theorem \ref{onesided HAW} assuming Theorem \ref{h winning}]  
 We start by fixing an atlas of coordinate charts for $X$. Let $$\mathfrak g \df \Lie(G),\quad 
 \mathfrak h \df \Lie(H^+),\quad\mathfrak p \df \Lie(H^-) \oplus \Lie (F)\,.$$
 Then 
 $\mathfrak g = \mathfrak h \oplus \mathfrak p$. 
 For any $y\in X$ denote by $\exp_y$ the map \eq{defexp}{\exp_y: \g\to X,\quad
 \x \mapsto\exp(\p)\exp(s)y = \exp(\p)h_sy\,,} 
 where $s\in\mathfrak h\cong\R$ and $\p\in \mathfrak p\cong \R^2$. 
 For any $y\in X$ one can choose  $0 \in W_y\subset \g$ of the form
 $W_{y,1} \times W_{y,2}$, where $W_{y,1} \subset \h$ and
 $W_{y,2}\subset \mathfrak p$ are open, such that $\exp_y|_{W_y}$ is
 one-to-one. Denote $$U_y \df \exp_y(W_y)\text{ and }\varphi_y \df
 \exp_y^{-1}|_{U_y}\,.$$ The collection $\{(U_y,\varphi_y): y\in X\}$
 is the atlas that we are going to use.  
 Let us also denote by $\h$ the Lie algebra of $H^+$ and by $\pi: \g \to \h$ the
projection associated with the direct sum
decomposition $\mathfrak g = \mathfrak h \oplus \mathfrak p$.
   
\smallskip

Now let us  establish the HAW property of the set \equ{result2}.  
Consider a compact $(F,H^+)$-transversal  submanifold $Z$ of $X$, and again fix $y\in X$ and $0 < \beta < 1/3$. Suppose that Bob begin with
a ball $B_1 \subset W_y$ of radius $\rho$. This corresponds to a game playing on $\h$ with Bob's first move being $B'_1 \df \pi(B_1)$.  Now choose $\vre$ according to Theorem
\ref{h winning}, and, as before, let
 Alice consult the strategy
she is assumed to have for playing on $\h$  for
the chosen value of $\beta$ and target set \equ{goodset}.
 Then we CANNOT claim that 
the set \eq{escaping z}{\varphi_y\big(\big\{x\in U_y : F^+x \cap
  Z^{(\vre/2)} = \varnothing \big \}\big) = \{(s,\p)\in W_y:
  F^+\exp(\p)h_sy \cap Z^{(\vre/2)} = \varnothing \big \}} is
$\beta$-HAW on $W_y$.
Arguing as in the proof of Proposition \ref{prop: does not depend on
  chart} we see that Alice may play dummy moves to assume that the
diameter of $B_n$ is as small as desired. Re-indexing WE CANNOT DO REINDEXING NOW! IT WILL MAKE THE SIZE OF BALLS SMALLER!!!  we may
assume that the diameter of $B_1$ is less than $\vre/10$, and also that the
restrictions of $\exp_y$ to $B_1$ distort distances by at most a
factor of $2$. Note that we are using a Euclidean metric on $\g$ and a
Riemannian metric on $X$ induced by it.
By Theorem \ref{h winning} the set 
 \eq{escaping h orbits}{\{s\in W_{y,1}: F^+h_sy \cap Z^{(\vre)} =
   \varnothing \big \}} is $\beta$-HAW on $W_{y,1}$. Since the
 conjugation by $g_t$, $t\ge 0$, does not expand elements of
 $\exp(\p)$, for any $s$ from the set \equ{escaping h orbits} and any
 $\p\in W_{y,2}$ for which $(s, \p) \in B_1$, the pair $(s,\p)$ belongs to the set \equ{escaping
   z}, and Proposition \ref{prop: slicing} finishes the proof. 
\ignore{ \smallskip
  
1) Alice will make arbitrariy (dummy) moves until the size of chosen
ball, say $\rho$,  is less than $\vre/10$, and also less than the
injectivity radius of one of the balls (not sure if we'll need it, but
just in case). Also without loss of generality we can assume that the
$\rho$-neighborhood of identity in the group $G$ is extremely close
(metrically) to a direct product of neighborhoods of identity in
$H^-$, $F$ and $H^+$.  
 \smallskip
 
2) Then fix a center of one of the balls, call it $x$, and keep in mind the winning $(\alpha,\beta)$-strategy supplied by the theorem. After that build a strategy for playing on $X$, equivalently on $G$, as follows: choose a ball in $H^+$ according to the strategy of Theorem  \ref{h winning}, and then consider an arbitrary ball in $G$ which projects to that ball. Caution: the metric is not exactly a product metric, and balls are euclidean and are not products of balls, so some constants need to be sacrificed. 
 \smallskip
 
3) At the end of the game we get the intersection of all the balls of the form $gh_sx$, where $s$ is in the set \equ{goodset} and $g$ belongs to the ball in the group $H^-F$ of radius at most $\vre/2$. Since conjugation by $g_t$ for $t > 0$ does not expand $H^-F$, it follows that $ \overline{F^+gh_sx}\cap Z^{(\vre/2)}  = \varnothing$. This shows that the set  $\big\{x\in X :  \overline{F^+x}\cap Z  = \varnothing\}$ 
 is $\alpha$-winning. The intersection of those two sets, which is precisely the set  \equ{escaping z}, is also $\alpha$-winning. }
 \end{proof}


\section{Transversality and reduction to discrete time actions}\name{discrete}
}
Our first step will be a reduction to discrete time actions. The argument here loosely follows
\cite[\S4]{K}. For a parameter $\tau>0$ to be defined later, 
denote by
$${F_\tau^+ \df \{g_{n\tau} : n\in\Z_+\} \ \ }
  $$ 
  the cyclic subsemigroup 
of $F$ generated by $g_\tau$. We will make a reduction
showing that we may replace the continuous semigroup $F^+$
with 
 $F^+_\tau$.
For $Z \subset X$ we define 
$$Z_{[t_1, t_2]} \df \bigcup_{t_1 \le t \le t_2}{g_tZ}.$$ 
We have:
\begin{lem}\name{transversality2} Suppose $Z$ is a compact $C^1$ submanifold of $X$ and   $H$ a 
connected subgroup of $G$. Then:

\begin{itemize}
\item[{\rm (a)}] If condition  $(F)$ holds, then there exists  $\sigma = \sigma(Z) > 0$ such that 
$Z_{[-\sigma,\sigma]}$ is a  $C^1$ manifold.

\item[{\rm (b)}]  If, in addition,  condition  $(H,F)$ holds, then there exists positive $\tau = \tau(Z) \le \sigma(Z)$ such that  
for any $x$ in $\zt$, $T_x(Hx)$ is
    not contained in $T_x\left(\zt\right)$.
\end{itemize}
\end{lem}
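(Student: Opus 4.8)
The plan is to prove the two statements by examining the map $\Phi: (-\epsilon,\epsilon) \times Z \to X$, $(t,x) \mapsto g_t x$, and showing it is an immersion (hence a local diffeomorphism onto its image after shrinking) for suitably small parameter values.

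\medskip

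\textbf{Part (a).} Consider the smooth map $\Phi: \R \times Z \to X$ given by $\Phi(t,x) = g_t x$. Its differential at a point $(0,x)$ with $x \in Z$ sends $T_{(0,x)}(\R \times Z) = T_0\R \oplus T_xZ$ to $T_x(Fx) + T_xZ$ inside $T_xX$. Condition $(F)$ says precisely that $T_x(Fx) \not\subset T_xZ$; since $\dim T_x(Fx) = 1$, this means the sum $T_x(Fx) + T_xZ$ is direct and $d\Phi_{(0,x)}$ is injective. So $\Phi$ is an immersion along $\{0\} \times Z$. Because $Z$ is compact, immersivity is an open condition, and injectivity of $\Phi$ on $\{0\}\times Z$ (which holds trivially) together with compactness of $Z$ lets one find $\sigma = \sigma(Z) > 0$ so that $\Phi$ restricted to $[-\sigma,\sigma] \times Z$ is an injective immersion with closed image; this is a standard tubular-neighborhood-type argument (see e.g.\ \cite[Chap.\ 2]{GP}). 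Its image is exactly $Z_{[-\sigma,\sigma]}$, which is therefore a $C^1$ submanifold of $X$ (with boundary). I would be a little careful about the possibility of self-intersections at distinct times, i.e.\ $g_{t_1} x_1 = g_{t_2} x_2$ with $t_1 \ne t_2$: this is ruled out for $|t_i|$ small by the injective-immersion/compactness argument, since the map is a local diffeomorphism near each point and the ``diagonal'' $\{(0,x)\}$ is where one starts.

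\medskip

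\textbf{Part (b).} Now I have the $C^1$ manifold $Z_{[-\sigma,\sigma]}$ and want to find $\tau \le \sigma$ so that for every $x \in Z_{[0,\tau]}$, the line $T_x(Hx)$ is not contained in $T_x(Z_{[0,\tau]})$. First observe what happens at $\tau \to 0^+$, i.e.\ along $Z$ itself (times near $0$): at a point $x \in Z$, using the local product structure from part (a), $T_x(Z_{[-\sigma,\sigma]}) = T_x(Fx) \oplus T_xZ$, and condition $(H,F)$ says exactly $T_x(Hx) \not\subset T_x(Fx) \oplus T_xZ$. So the desired non-containment holds at every point of $Z$ (viewed inside $Z_{[-\sigma,\sigma]}$). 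Since $Z$ is compact and the condition ``$T_x(Hx) \not\subset T_x(Z_{[-\sigma,\sigma]})$'' is open in $x$ (it is the non-vanishing of an appropriate determinant/exterior-product expression depending continuously on $x$), it persists on a neighborhood of $Z$ in $Z_{[-\sigma,\sigma]}$. Choosing $\tau \in (0,\sigma]$ small enough that $Z_{[0,\tau]}$ lies inside this neighborhood — possible because $Z_{[0,\tau]} \downarrow Z$ as $\tau \downarrow 0$ and these are compact — gives the claim, noting that $T_x(Z_{[0,\tau]}) = T_x(Z_{[-\sigma,\sigma]})$ for $x$ in the interior, and the boundary cases ($x \in Z$ or $x \in g_\tau Z$) are handled by the smooth-extension convention for tangent spaces at the boundary.

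\medskip

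\textbf{Main obstacle.} The routine part is the differential computation; the delicate point is the passage from a pointwise open condition on the compact set $Z$ to a uniform choice of $\sigma$ (resp.\ $\tau$) valid on all of $Z_{[-\sigma,\sigma]}$ (resp.\ $Z_{[0,\tau]}$). One must phrase ``$Z_{[0,\tau]}$ shrinks to $Z$'' carefully — e.g.\ in terms of tangent spaces converging, which requires that $Z_{[-\sigma,\sigma]}$ is already known to be a manifold so that ``$T_x$'' varies continuously, which is why (a) must precede (b). I would also want to make sure the determinant-type function witnessing transversality is genuinely continuous across the boundary of $Z_{[0,\tau]}$, using the $C^1$ extensions; this is the only place where the manifold-with-boundary technicalities actually bite.
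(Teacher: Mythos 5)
Your proof is correct and follows essentially the same route as the paper's: for (a) you use transversality to show the natural flow-out map is an immersion and then compactness to make it an embedding on $[-\sigma,\sigma]\times Z$ (the paper does the identical computation in a local parametrization $\psi$ of $Z$, extending it to $\tilde\psi(u,t)=g_t\psi(u)$); for (b) both arguments observe that $T_x(Z_{[-\sigma,\sigma]})=T_xZ\oplus T_x(Fx)$ for $x\in Z$, so condition $(H,F)$ gives $H$-transversality along $Z$, and then invoke openness of $H$-transversality plus compactness to pick $\tau\le\sigma$.
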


Note that the conclusion of part (b) of the above lemma coincides with
condition $(F)$  with $F$ replaced by $H$ and $Z$ replaced by $\zt$.  
It will be convenient to introduce more notation and, for a
subgroup $H$  of $G$ and a smooth submanifold $Z$ of $X$, say that
$Z$ is {\sl $H$-transversal at $x\in Z$\/} if  $T_x(Hx)$ is not
contained in $T_xZ$, and that that $Z$ is {\sl $H$-transversal\/}
if it is $H$-transversal at every point of $Z$. In other words, if
condition $(F)$ holds with $F$ replaced by $H$.  This way, condition
$(F)$ says that $Z$ is $F$-transversal, and  the  conclusion of Lemma
\ref{transversality2}(b) states that  $\zt$ is $H$-transversal.

 \begin{proof}
Let $\dim(Z) = k$. Using a finite covering of $Z$ by appropriate
coordinate charts of $X$, one can without loss of generality assume
that $Z$ is   of the form  $\psi\left(\overline U\right)$ for some
bounded open $U\subset \br^k$, with $\psi$ being a $C^1$, nonsingular
embedding defined on an open   $U'\subset \R^k$ strictly containing
$\overline U$. Define $\tilde \psi:U'\times\R\to X$ by putting $\tilde
\psi (u,t) = g_t\big(\psi(u)\big)$. From  the $F$-transversality of
$Z$ it follows that $\tilde\psi$ is nonsingular at $t=0$ and
$u\in\overline U$. Hence  $\tilde\psi$ is a nonsingular embedding of
$U''\times [-\sigma, \sigma]$ into $X$ for some   $\sigma>0$ and  an
open set $U''$ strictly containing $\overline U$, and (a) is proved. 

Clearly the tangent space to $Z_{[-\sigma,\sigma]}$ at  $x\in Z$ is equal to $T_xZ\oplus T_x(Fx)$. 
Therefore condition $(H,F)$ implies that $Z_{[-\sigma,\sigma]}$ is
$H$-transversal at any point of $Z$. But the $H$-transversality is
clearly an open condition, hence  it holds at any point  of
$Z_{[-\sigma,\sigma]}$ which is close enough to $Z$. By compactness one can
choose a positive $\tau\le \sigma$ such that  $\zt$ is  $H$-transversal.  
 \end{proof}

Here is another way to express $H$-transversality: fix a  Riemannian
metric `dist' on the tangent bundle of
$X$, and for a $C^1$
submanifold $Z$ of $X$ consider the function $\thz:Z\to\R$,   
$$
\thz(x)\df \sup_{v\in T_x(Hx),\,\|v\| = 1}\dist (v,T_xZ).
$$
It is clear that 
$\thz(x)\ne 0$ iff $Z$ is  $H$-transversal at $x$, and that  $\thz$ is
continuous in $x\in Z$. Therefore the following holds:  

\begin{lem}\name{transversality function} 
A compact $C^1$ submanifold $Z$ of $X$ is
$H$-transversal iff there exist  $c = c(Z) > 0$ such that $\thz(x) \ge c$ for all $x\in Z$. 
\end{lem}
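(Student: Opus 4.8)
The plan is to read the lemma off from the two properties of $\thz$ recorded immediately before its statement: that $\thz(x)\neq 0$ exactly when $Z$ is $H$-transversal at $x$, and that $\thz$ is continuous on $Z$. The only remaining ingredient is the compactness of $Z$.

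The implication $\Leftarrow$ is immediate: if $\thz(x)\ge c$ for all $x\in Z$ with some $c>0$, then $\thz(x)\neq 0$ for every $x\in Z$, hence $Z$ is $H$-transversal at every point, i.e.\ $H$-transversal. For $\Rightarrow$, assume $Z$ is $H$-transversal, so that $\thz(x)>0$ for all $x\in Z$. Being continuous on the compact set $Z$, the function $\thz$ attains its infimum, say at $x_0\in Z$; setting $c\df\thz(x_0)$ we get $c>0$ (by $H$-transversality at $x_0$) and $\thz(x)\ge c$ for all $x\in Z$, as desired.

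There is no real obstacle here; the content of the lemma is just the standard upgrade of a pointwise nonvanishing condition to a uniform positive lower bound via compactness. The one point that deserves a word is the continuity of $\thz$, which the text asserts and which holds because $x\mapsto T_x(Hx)$ and $x\mapsto T_xZ$ depend continuously on $x$ (the former as the image of $\Lie(H)$ under the derivative of the smooth orbit map $h\mapsto hx$, the latter because $Z$ is $C^1$), while $\dist$ on the tangent bundle and the supremum over the unit sphere of $T_x(Hx)$ are continuous in these inputs. One could equally sidestep continuity of $\thz$ and argue directly by sequential compactness: were $\inf_{x\in Z}\thz(x)=0$, choose $x_n\in Z$ with $\thz(x_n)\to 0$ and a subsequential limit $x_n\to x_0\in Z$; a short check then gives $\thz(x_0)=0$, contradicting $H$-transversality at $x_0$.
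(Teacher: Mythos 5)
Your argument is exactly what the paper leaves implicit: the paper records the two observations (pointwise nonvanishing of $\thz$ characterizes $H$-transversality, and $\thz$ is continuous) and then simply states ``Therefore the following holds,'' relying on compactness in the same way you do. Your proof is correct and matches the paper's (unstated) route; your added remarks on why $\thz$ is continuous and the sequential-compactness alternative are fine but not needed.
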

A right-invariant metric on $G$ induces a well-defined Riemannian
metric on $X$ and we now change notation slightly, writing `dist' for
the resulting path metric on $X$.
Now let $Z$ be as in Theorem \ref{onesided HAW}, that is, compact and
$(H^+,F)$-transversal, and take positive $\tau\le \tau(Z)$ as in  Lemma
\ref{transversality2} satisfying in addition: 
\eq{lipschitz}{x,y\in X, 0\le t \le \tau\ \Rightarrow\ \dist(g_tx,g_ty) \le 2 \dist(x,y)\,.}
This can be done because $g_t, |t|\leq \tau$ is bounded and hence
there is a uniform bound on the amount by which it distorts the
Riemannian metric. 
Suppose that for some $x\in X$ and $\vre > 0$
there exists $t\ge 0$ and $z\in Z$ such that the distance between
$g_tx$ and $z$ is less than  $\vre$. Choose $0 \le t_1 < \tau$ such
that $t+t_1 = n\tau$ for some $n\in \N$.  
It follows from \equ{lipschitz} that $\dist(g_{n\tau}x,g_{t_1}z) <
2\vre$.
This rather
elementary argument proves that 
$$ \overline{F^+_\tau x}\cap \zt
= \varnothing\ \Rightarrow\  \overline{F^+
  x}\cap Z
  = \varnothing\,. 
$$
Thus to establish Theorem \ref{onesided HAW}(b) it suffices to prove the following:

  \begin{thm}\name{h discrete winning} For any compact
    $H^+$-transversal  submanifold $Z$ of $X$ and any  $\tau > 0$   
    the set  
\eq{discrete goodset}{\big\{x\in X :   {F^+_\tau x}\cap Z 
  = \varnothing\big\}} 
 is  HAW.
  \end{thm}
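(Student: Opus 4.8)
The plan is to play the hyperplane absolute game directly on $X$ using the exponential-chart atlas, and to exploit $H^+$-transversality of $Z$ to show that at every stage of the game, Bob's current ball $B_n$ meets the forbidden set $Z_{[0,\infty)} \df \bigcup_{n\ge 0} g_{n\tau}Z$ in a set which, in suitable coordinates near $B_n$, is contained in a bounded number of $C^1$ hypersurfaces. Since $Z$ is compact and $g_{n\tau}$ expands the $H^+$-direction at a uniform exponential rate, only finitely many translates $g_{n\tau}Z$ can be relevant at any given scale; more precisely, if $B_n$ has radius $r_n$, then $g_{n\tau}Z$ can intersect $B_n$ only for $n\tau$ in a window of bounded length (depending on $r_n$ and the transversality constant $c = c(Z)$ from Lemma \ref{transversality function}), because once $g_{n\tau}Z$ has been ``pushed through'' $B_n$ in the expanding direction it cannot return. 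This is the mechanism by which a global, infinite union becomes locally finite.

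First I would fix the atlas $\{(U_y,\varphi_y)\}$ from \equ{defexp}, \equ{defatlas}, so that by Proposition \ref{prop: does not depend on chart} it suffices to show that $\varphi_y\big(\{x\in U_y: F^+_\tau x\cap Z=\varnothing\}\big)$ is HAW on $W_y$ for each $y$. Following Remark \ref{remark: radii}, Alice may assume Bob plays with $r_n\to 0$. The heart of the argument is a ``local finiteness and transversality at scale $r$'' lemma: there is a constant $N = N(Z,\tau)$ and, for each scale $r>0$, a finite set of indices $n$ (of cardinality at most $N$) such that within any ball $B\subset X$ of radius $r$, the set $B\cap Z_{[0,\infty)}$ is covered by at most $N$ connected pieces, each of which is $H^+$-transversal (hence, in the chart, lies within a $C^1$ hypersurface transverse to the $\h$-direction after the diffeomorphism straightening the $H^-F$-foliation as in the proof of Theorem \ref{onesided HAW}(a)). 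Granting this, Alice's strategy is: at step $n$, look at $B_n$, enumerate the at most $N$ transversal hypersurface-pieces meeting it, and over the next $N$ rounds delete their neighborhoods one at a time, using the HAW hypothesis/strategy for a single hyperplane in each round (legitimate since a $C^1$ image of a hyperplane-neighborhood is, after refining the chart, again essentially a hyperplane-neighborhood up to bounded distortion, cf.\ Proposition \ref{properties}(d) and Proposition \ref{manifoldproperties}(c)). Because $r_n\to 0$, new pieces entering at finer scales are handled at later rounds; a bookkeeping/diagonalization over the countably many rounds ensures every piece that ever meets some $B_n$ is eventually deleted, so $\bigcap_n B_n$ avoids $Z_{[0,\infty)}$, which is exactly the target set \equ{discrete goodset}.

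I expect the main obstacle to be the ``local finiteness at scale $r$'' step, i.e.\ bounding uniformly in $y$ and $r$ the number of translates $g_{n\tau}Z$ that can intersect a given small ball, together with the verification that each intersection remains $C^1$ and $H^+$-transversal with a transversality constant bounded below independently of $n$. The expansion estimate gives the bound on the number of relevant $n$: since $Z$ is compact with bounded diameter $D$, and $g_{n\tau}$ stretches the $\h$-direction by $e^{n\tau}$ while (non-strictly) contracting $\p$, the $\h$-extent of $g_{n\tau}Z$ grows like $e^{n\tau}$, so for $g_{n\tau}Z$ to fit inside (or even meet) a ball of radius $r$ one needs either $n$ small or geometric reasons forbidding it — making the count $O(\log(1/r))$ a priori, which is not bounded, so the real work is to observe that Bob's ball shrinks by at most a factor $\beta$ per round while the expansion is by $e^{\tau}$, so after a controlled number of rounds the piece is expelled and never returns; this converts the $O(\log(1/r))$ into an $O(1)$ amortized count. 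Establishing uniform transversality of $g_{n\tau}Z$ for all $n$ simultaneously is where one must use that $g_t$ for $t$ in a $\tau$-window has bounded distortion \equ{lipschitz} together with Lemma \ref{transversality2}(b): the window translates $g_{t_1}Z$, $0\le t_1\le\tau$, are uniformly $H^+$-transversal, and then $g_{n\tau}\,g_{t_1}Z$ is handled by noting $g_{n\tau}$ fixes $H^+$ up to the flow direction. The remaining steps — straightening foliations, passing between the chart and $X$, and the countable-intersection bookkeeping — are routine given Propositions \ref{properties}, \ref{manifoldproperties} and the arguments already used for Theorem \ref{onesided HAW}(a).
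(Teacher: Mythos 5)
Your high-level setup is on the right track: the atlas reduction via Proposition \ref{prop: does not depend on chart}, the use of Remark \ref{remark: radii} to assume $r_n\to 0$, and the recognition that $H^+$-transversality of $Z$ together with the uniform transversality constant $c(Z)$ from Lemma \ref{transversality function} is what lets you trap $\{\x\in B_i: g_\tau^k\exp(\x)y\in Z^{(\vre)}\}$ in a thin hyperplane neighborhood --- all of this matches the paper. But there is a genuine quantitative gap in the core counting step, and it is exactly the gap that the paper's introduction of the \emph{hyperplane percentage game} is designed to close.

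You propose to have Alice delete the relevant hyperplane neighborhoods \emph{one at a time} in the plain HAW game, and you justify the convergence of the resulting bookkeeping by an ``amortized $O(1)$'' count. Run the arithmetic: if $m$ is the number of translates $g_\tau^k Z$ that can meet Bob's current ball, Alice needs $m$ rounds to delete them one per round. During those $m$ rounds Bob's radius can shrink by a factor $\beta^m$, which enlarges the window of ``active'' indices $k$ by roughly $m\log(1/\beta)/(2\tau)$. For the backlog to shrink you would need $m\log(1/\beta)/(2\tau) < m$, i.e.\ $\beta > e^{-2\tau}$. But $\beta$-HAW must hold for \emph{arbitrarily small} $\beta\in(0,1/3)$ (that is the whole content of the HAW property, and $\beta$-HAW for some $\beta$ does not imply $\beta'$-HAW for $\beta'<\beta$), so for $\beta < e^{-2\tau}$ the one-at-a-time strategy does not keep up, and the ``amortized'' count in fact diverges. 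The diagonalization you gesture at cannot be carried out.

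The paper's fix is to play a different game: the \emph{hyperplane percentage game} (HPW) of \cite{BFS}, in which Alice may offer $N_i$ hyperplane neighborhoods in one move and Bob is only required to avoid at least half of them. By Lemma \ref{percentage}, HPW implies HAW. The crucial saving is that $m$ hyperplanes are all eliminated after only $n=\lfloor\log_2 m\rfloor+1$ rounds (since $2^{-n}m<1$), during which Bob's radius shrinks by only $\beta^n$. The balance inequality $\beta^{-n}<e^{2m\tau}$ in \equ{define r} then says that the number of new indices $k$ entering the active window during those $n$ rounds is still at most $m$; this is satisfiable for any fixed $\beta$ because the left side grows polynomially in $m$ while the right side grows exponentially. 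The stage/window partition in the proof of Theorem \ref{h discrete winning} is precisely the bookkeeping that turns this polynomial-vs-exponential trade-off into a winning strategy. Without the percentage game (or some equivalent device such as Moshchevitin's potential argument), the counting simply does not close.

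So: the transversality input and the chart mechanics in your proposal are correct and essentially the same as the paper's, but the game-theoretic core is wrong as stated. You would need to either (i) import the HPW game and Lemma \ref{percentage}, or (ii) find another device that removes the $\log(1/\beta)/\tau$ dependence; (i) is what the paper does.
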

 \ignore{, and meanwhile let us 
point out that using it one can derive a discrete-time version of
Theorems \ref{dyn countable} and \ref{onesided HAW}:\comdima{Should we
  advertise it as one of the main results? BW: recall I actually think
we can omit it.}

 \begin{cor}\name{dyn discrete countable}  Fix $\tau > 0$. 
 \begin{itemize}
 \item[(i)] Let $Z$ be a  countable union of   $H^+$-transversal submanifolds  of $X$. Then  the set
$$
\big\{x\in X : F_\tau^+x \text{ is bounded and } \overline{F_\tau^+x}\cap Z  = \varnothing\}$$
is  HAW, hence thick;

 \item[(ii)] Let $Z$ be a  countable union of    submanifolds  of $X$ which are both $H^+$- and $H^-$-transversal. Then  the set
$$
\big\{x\in X : F_\tau x \text{ is bounded and } \overline{F_\tau x}\cap Z  = \varnothing\}$$
is  HAW, hence thick.
 \end{itemize} \end{cor}}

\section{Completion of the proof: the percentage game
}
\name{sec: completing proof}
In this final section we prove Theorem \ref{h discrete winning}.  
The argument below originates with an idea of Moshchevitin \cite{Mo},
which was developed further in the Ph.D.\ Thesis of Broderick, and in
the papers \cite{B, BFK, BFS}. We
follow the streamlined presentation of
\cite{BFS}, which consists of defining yet another game. 

Fix $  \beta > 0$  and a target set $S\subset \R^d$.
The {\em hyperplane percentage game} is defined as follows: Bob begins as 
usual by choosing a closed ball 
$B_1 \subset \R^d$. Then, for each $i \ge 1$, once $B_i$ (of radius $r_i$)
is chosen, Alice chooses finitely many affine hyperplanes $L_{i,j}$ and
numbers $\vre_{i,j}$, where $j=1,\dots,N_i$,
satisfying $0 < \vre_{i,j} \le \beta r_i$.
Here $N_i$ can be any positive integer that Alice chooses. Bob then
chooses a ball $B_{i+1}\subset B_i$ with radius   $r_{i+1} \ge 
\beta r_i$ such that $$B_{i+1} \cap  
L_{i,j}^{(\vre_{i,j})} = \varnothing\text{ for at least $\frac{N_i}{2}$ values of }j\,.$$
Thus we obtain
as before a nested sequence of closed balls $B_1 \supset B_2 \supset
\cdots$  and declare Alice the winner if and 
only if \equ{Alicewins} holds.  If Alice has a strategy to win
regardless of Bob's play, we say that $S$ is 
\textsl{$\beta$-hyperplane percentage winning, or $\beta$-HPW}. 
Note that for large values of $\beta$ it is possible for Alice to
leave Bob with no available moves
after finitely many turns.   However, an elementary argument
(see \cite[Lemma 2]{Mo} or \cite[\S2]{BFK}) shows  that Bob always has
a legal move if $\beta$ is smaller 
than some constant $\beta_0(d)$. In particular $\beta_0(1) =
1/5$.
If $S$ is $\beta$-HPW for each $ 0 < \beta < \beta_0(d)$, we say that
$S$ is \textsl{hyperplane percentage winning (HPW)}. 
It is clear that $\beta$-HPW
implies  $\beta'$-HPW if $\beta \le \beta'$; thus HPW is equivalent to
$\beta$-HPW for arbitrary small values of $\beta$.  

We remark that the game defined above is actually a special case
 of the $(\beta,p)$-game defined in \cite{BFS}, corresponding to the choice
 $p=1/2$. Here $p$ represents the percentage of hyperplanes that Bob
 is obliged to stay away from. 

One sees that in the hyperplane percentage game the rules are more
favorable to  Alice than in the hyperplane absolute game, and so any
HAW set is automatically HPW. Surprisingly, the converse is true, see
\cite[Lemma 2.1]{BFS}: 

 \begin{lem}\name{percentage} For any $\beta \in (0, 1/3)$ there
   exists $\beta' \in \big(0, \beta_0(d)\big)$ such that any set which is
   $\beta'$-HPW is
   $\beta$-HAW. In particular the HPW and HAW properties are equivalent. 
\end{lem}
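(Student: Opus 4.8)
The plan is to treat the two halves of the equivalence separately. The implication from $\beta$-HAW to $\beta$-HPW is immediate: in the hyperplane percentage game Alice is always free to take $N_i=1$, in which case her move is literally a move of the hyperplane absolute game and the requirement that Bob avoid ``at least $N_i/2$'' of one neighbourhood forces him to avoid it; so a $\beta$-HAW strategy is in particular a $\beta$-HPW strategy. Granting the reverse implication of the lemma, the ``in particular'' follows formally: if $S$ is HPW then, given any $\beta\in(0,1/3)$, it is $\beta'$-HPW for the $\beta'\in(0,\beta_0(d))$ produced by the lemma, hence $\beta$-HAW, and letting $\beta$ shrink shows $S$ is HAW; conversely if $S$ is HAW then it is $\beta$-HAW, hence $\beta$-HPW, for arbitrarily small $\beta$, i.e. HPW. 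So the whole content is: for each $\beta\in(0,1/3)$ there is a small $\beta'$ with $\beta'$-HPW $\Rightarrow$ $\beta$-HAW.

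To prove that, fix $\beta\in(0,1/3)$, let $\sigma$ be a winning strategy for Alice in the $\beta'$-HPW game (with $\beta'\in(0,\beta_0(d))$ to be chosen), and let Alice play the $\beta$-HAW game by running a \emph{virtual} $\beta'$-HPW game in which $\sigma$ plays Alice's role and the virtual Bob's moves are assembled from the real play in blocks of a fixed length $M=M(d,\beta)$. Bob's opening real ball $B_1$ is declared to be the virtual ball $\tilde B_1$. Inductively, once $\tilde B_k$ (radius $\tilde r_k$) has been identified with a real ball, $\sigma$ answers with neighbourhoods $L_{k,1}^{(\varepsilon_{k,1})},\dots,L_{k,N_k}^{(\varepsilon_{k,N_k})}$, $\varepsilon_{k,j}\le\beta'\tilde r_k$; Alice then plays $M$ consecutive $\beta$-HAW moves (described in the next paragraph) and declares the resulting real ball to be $\tilde B_{k+1}$. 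Over a block the real radius shrinks by a factor at least $\beta^M$, so provided $\beta'\le\beta^M$ the ball $\tilde B_{k+1}\subset\tilde B_k$ has radius $\ge\beta'\tilde r_k$, and if in addition it is disjoint from at least $N_k/2$ of the $L_{k,j}^{(\varepsilon_{k,j})}$ it is a legal virtual-Bob move and $\sigma$ proceeds. Since each block is exactly $M$ real moves, $\{\tilde B_k\}$ is a cofinal subsequence of the nested sequence $\{B_n\}$ of Bob's real balls, so $\bigcap_n B_n=\bigcap_k\tilde B_k$, and since $\sigma$ is winning this set meets $S$; thus \equ{Alicewins} holds for the $\beta$-HAW game, i.e. $S$ is $\beta$-HAW.

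It remains to produce, for each block, $M=M(d,\beta)$ moves of the $\beta$-HAW game, starting from a ball $B$ of radius $r$, that force Bob into a ball missing at least half of an \emph{arbitrary} finite family $L_1^{(\varepsilon_1)},\dots,L_N^{(\varepsilon_N)}$ with $\varepsilon_j\le\beta'r$. Legality of Alice's moves is not an issue: throughout the block the current radius stays $\ge\beta^M r\ge\beta'r\ge\varepsilon_j$, so any of the neighbourhoods, or fattened versions of their hyperplanes, may be deleted. For $d=1$ one move suffices: Alice deletes the $(\beta r)$-neighbourhood of the median of the points $L_1,\dots,L_N$; whichever of the two caps Bob is driven into, each of the (at least $N/2$) points on the far side, together with its $\varepsilon_j$-neighbourhood of half-width $\varepsilon_j<\beta r$, lies on one side of the deleted slab while Bob's ball lies strictly on the other, so Bob avoids it. For $d\ge2$ one cannot literally iterate this, and a more elaborate but still finitary analysis of the directions occurring in the family is required; this is exactly \cite[Lemma~2.1]{BFS}, whose argument I would follow, the only genuine work being the constant-chasing that keeps the number $M$ of deletions dependent on $d$ and $\beta$ alone while the errors coming from the directions stay negligible against the relevant powers of $\beta$. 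With $M=M(d,\beta)$ so obtained one finally sets $\beta':=\beta^{M}$, enlarging $M$ beforehand if necessary so that also $\beta^{M}<\beta_0(d)$.

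The main obstacle is precisely the higher-dimensional geometric claim of the previous paragraph: forcing Bob to miss half of an \emph{unboundedly large} family of thin hyperplane-neighbourhoods using only a \emph{bounded} number of absolute-game deletions. One cannot delete the neighbourhoods one at a time, since $N$ is unbounded while $M$ must stay bounded --- the bound on $M$ being forced by the requirement that each block realise a legal $\beta'$-HPW shrinkage --- and the linear-order (median) trick that settles $d=1$ has no immediate analogue once there is no order on directions. Overcoming this is the entire difficulty of \cite[Lemma~2.1]{BFS}; everything else above is bookkeeping.
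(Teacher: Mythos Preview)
The paper does not prove this lemma at all: it simply quotes \cite[Lemma~2.1]{BFS} and moves on. Your proposal therefore goes further than the paper, correctly laying out the virtual-game architecture (simulate the $\beta'$-HPW strategy inside the $\beta$-HAW game by grouping moves into blocks of fixed length $M$) and carrying out the $d=1$ case via the median trick. For $d\ge 2$ you, like the paper, ultimately defer to \cite{BFS}; since the game in the proof of Theorem~\ref{h discrete winning} is played on open subsets of $\mathfrak g\cong\R^3$, that higher-dimensional case is in fact the one the paper needs, so your $d=1$ argument, while a correct warm-up, does not cover the application. One minor point: in your $d=1$ argument you write $\varepsilon_j<\beta r$, but with $M=1$ and $\beta'=\beta$ you only get $\varepsilon_j\le\beta r$, which can cause a boundary-point collision depending on open/closed conventions; taking $M\ge 2$ (which you may need anyway to force $\beta'<\beta_0(1)=1/5$) resolves this harmlessly.
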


\ignore{{\comment{DK:  \cite[Lemma 2.1]{BFS} did not specify the relation
    between $\beta'$ and $\beta$. Should we do it and explicitly write
    down a function $\beta'(\beta,d)$? what do you think? the paper is
    at http://arxiv.org/pdf/1208.2091.pdf 

BW: I was too lazy to figure this out. I suggest we leave this as is. 
}} 
}

 \begin{proof}[Proof of Theorem \ref{h discrete winning}]  Recall that
   we are given $\tau,\beta > 0$ and a compact 
    $H^+$-transversal  submanifold $Z$ of $X$. Our goal (after using
    Lemma \ref{percentage} and rewriting $\beta$ for $\beta'$) is to 
    show that the set \equ{discrete
      goodset} is $\beta$-HPW. We will assume, without loss of
    generality, that  
    \eq{conditiononbeta}{\beta < e^{-2\tau}\,.}
     Let us say that a map between two metric spaces is {\sl
    $C$-bi-Lipschitz\/} if the ratio of ${\dist\big(f(x),f(y)\big)}$
  and ${\dist(x,y})$ is uniformly bounded between $1/C$ and $C$.  Also, for a subset $Y$ in a metric space and $\vre > 0$  let us denote by $Y^{(\vre)}$ the $\vre$-neighborhood of $Y$. 

\smallskip

The first step is to fix an atlas of coordinate charts for $X$. 
We will do it as in the proof of Theorem \ref{onesided HAW}(a), that is, using 
charts  \equ{defatlas} where $\exp_y$ is defined by  \equ{defexp} and $\exp_y|_{W_y}$ is
 one-to-one for all $y\in X$. 
As before, it suffices to show that
for any $y\in X$, the set 
$$\varphi_y\big(\big\{x\in U_y
  :  F^+_\tau x\cap Z 
  = \varnothing\big \}\big) = \{\x \in W_y:
  F^+_\tau\exp(\x)y \cap Z 
  = \varnothing\big \}$$
   is HAW on $W_y$.
  
\ignore{Recall that in \equ{defexp} we defined a map $\exp_y:\g\to X$ which
sent cosets of $\mathfrak p$ to orbits of the group  
$FH^-$. 

In this proof the main role is played by 
$H^+$-orbits and it will be more convenient to use another
map. Namely, for any $y\in X$ denote
by $\Exp_y$ the map 
\eq{defExp}{\Exp_y: \g\to X,\quad 
 \x \mapsto\exp(s)\exp(\p)y = h_s\exp(\p)y\,,} 
 where $s\in\mathfrak h\cong\R, \ \p\in \mathfrak p\cong \R^2$ and $(s, \p)
 \in \mathfrak{h} \oplus \mathfrak{p} \cong \mathfrak{g}.$ }

\smallskip
 
 The next step is to collect some information about $Z$. Since it is
 bounded, one can choose $\sigma_1 < 1$ such that the restriction of
 $\exp_y$ to $B_\g(0,4\sigma_1)$ is $2$-bi-Lipschitz (in particular,
 injective) whenever   $B_X(y,2\sigma_1)\cap Z \ne\varnothing$ 
(here, as before, we work with a path-metric on $X$ coming from a
right-invariant 
Riemannian metric on $G$ obtained from some inner product on
$\g$). In light of Corollary \ref{cor: does not depend on atlas}, we
can (by replacing the $U_y$ with smaller sets, depending on $Z$) also assume that
$U_y\subset B_X(y,2\sigma_1)$ for any 
$z\in Z^{(4\sigma_1)}$. 
Then, because of compactness and $C^1$-smoothness of $Z$, for every $b > 0$
there exists $0 < \sigma_2(b)\le \sigma_1$ such that
\eq{smoothness}{
\begin{aligned}
\sigma \le \sigma_2(b) \text{ and }   B_X(y,\sigma)\cap Z\ne\varnothing
\quad\Rightarrow\quad \text{there exists}\\   \text{ a $\dim(Z)$-dimensional subspace $L$ of }\g\text{ such that } 
\varphi_y\big(B_X(y,\sigma)\cap Z&\big) \subset L^{(b\sigma)}\,.\end{aligned}} 
(More precisely, $L$ is the tangent space to $\varphi_y(Z)$  at
$\varphi_y(z)$, where $z$ is some point in the intersection of   $
B_X(y,\sigma)$ and $Z$.) Also, recall that $Z$ is 
$H^+$-transversal, and let $c = c(Z)$ be as in Lemma \ref{transversality function}.

Now  choose $m$ large enough so that
\eq{define r}{\beta^{- n} < e^{2m\tau},\quad 
\text{ where } n = \lfloor\log_2 m\rfloor + 1\,.}
This is possible since the left (resp., right) side of the inequality
in \equ{define r} depends polynomially (resp., exponentially) on $m$.

Pick an arbitrary $y\in X$ and let us suppose that Bob chooses a ball $B_0 \subset W_y$ of radius $r_0$.
Set 
\eq{set1}{
b\df \frac {c\beta^{ n+2}e^{-2m\tau}}{16}\,,
}
\eq{set3}{
\sigma \df \min\left(\frac14\sigma_2(b),e^{2m\tau}r_0\right)\,,} 
and let  
\eq{set2}{\delta \df e^{-2m\tau}\sigma \quad\text{ and 
}\quad\vre \df b\sigma =  \frac1{16}c\beta^{ n+2}\delta\,. } 
We will show that Alice can play the $\beta$-hyperplane percentage game in such a way
that the intersection of all the balls belongs to the set 
$$ \{\x \in W_y: 
  F^+_\tau\exp(\x)y \cap Z^{(\vre)} 
  = \varnothing\big \}\,.$$

Note that it follows from \equ{set3} and \equ{set2} that $\delta \le
r_0$. The game will start with Alice making dummy moves until the
first time Bob's ball has 
radius $r_1 \le \delta$ (recall that if the radii $r_j$ of the $B_j$ do not tend to
zero, then Alice wins, see Remark \ref{remark: radii}). Re-indexing if
necessary, let us call this ball $B_1$ and its radius $r_1$; note that 
we have  $r_1 \ge \beta\delta$.  

In order to specify Alice's strategy we will partition her moves into
windows. For each $j, 
k \in \N$, we will say that {\em $k$ lies in the $j$th window} if
\eq{jthwindow} {\beta^{- n(j-1)}\le e^{2k\tau} <  \beta^{-j n}\,.}
By \equ{define r}, for every $j  \in \N$, there are at most $m$ indices $k$ lying in the $j$th window, and \equ{conditiononbeta} guarantees that every $k\in \N$ lies in some window.
We will call the
indices $i$ for which 
\eq{jthsize}{\beta^{ n j}r_1 < r_i \le
  \beta^{ n(j-1)}r_1}
the {\em $j$th stage of the game.} 
The first stage begins with Bob's initial ball $B_1$, and the rules of the game
imply that each stage contains 
at least $ n$ indices. Loosely speaking, Alice will use her moves
indexed by numbers in
the $j$th stage, to ensure that the points $\x $ contained in the balls
chosen by Bob  satisfy $g_{ \tau}^k\exp(\x) y \notin Z^{(\vre)}$  for any $k$ in the
$j$th window. We now specify Alice's strategy in more detail.

Fix $j$ and suppose that $i = i(j)$ is the first index of stage
$j$.
For any $k$ belonging to the $j$th window, 
denote
$$A_{j,k} \df \{g_\tau^k\exp(\x) y : \x  \in B_i \} 
\,.$$
In view of \equ{jthsize}, the diameter of $A_{j,k}$ is at most 
$$2e^{2k\tau}\beta^{ n(j-1)}r_1 \stackrel{\equ{jthwindow}}{\le} 
2\beta^{- n}r_1 \stackrel{\equ{define r}}{\le} 2 e^{2m\tau}r_1
\stackrel{\equ{set2}}{\le} 2\sigma \stackrel{\equ{set3}}{\le} \frac12 
\sigma_2(b)\,.$$
 If $A_{j,k}$ does not intersect $Z^{(\vre)}$  for any $k$ in the
 $j$th window, Alice makes all her moves in the $j$th stage of the game 
 in an arbitrary way (e.g.\ she could put $N_i=0$, that is decide not to specify
 any hyperplanes on her $i$th move). Suppose that one of them does; let $x \df g_\tau^ky$. Then 
$A_{j,k}\cap B_X\big(x,  \sigma_2(b)\big)\ne\varnothing$. Now let
 us use $\varphi_x$ to map everything to $\g$. In view of
 \equ{smoothness},  there exists $z\in Z$ and  a $\dim(Z)$-dimensional subspace $L = T_{\varphi_x(z)}\big(\varphi_x(Z)\big)$ of $\g$ such that
 \begin{equation*}
 \begin{split}
 \varphi_x\left(A_{j,k} \cap Z^{(\vre)}\right) \subset  \varphi_x\left(A_{j,k} \right) \cap \left( \varphi_xZ\right)^{(2\vre)}
\stackrel{\equ{smoothness}}{\subset}  \varphi_x\left(A_{j,k} \right) \cap L^{(2\vre + 2b\sigma)}\\
\stackrel{\equ{set1},\, \equ{set2}}{\subset}  \varphi_x\left(A_{j,k}
\right) \cap L^{\left(c\beta^{ n+2}e^{-2m\tau}\sigma/4\right)}\,. 
 \end{split}
 \end{equation*}
In view of  Lemma \ref{transversality function} and the $2$-bi-Lipschitz property of maps $\varphi_x$ and $\varphi_z$, the intersection of $L^{\left(c\beta^{ n+2}e^{-2m\tau}\sigma/8\right)}$ with any translate of $\h$ has length at most $2\beta^{ n+2}e^{-2m\tau}\sigma$. 
Consequently, the intersection of 
\eq{kthset}{
\left \{ \x  \in B_i : g_\tau^k\exp(\x) y \in Z^{(\vre)} \right \}  =
\varphi_y\left(g_\tau^{-k}(A_{j,k} \cap Z^{(\vre)})\right)   =
\Ad_{g_\tau^{-k}}\left(\varphi_x(A_{j,k} \cap Z^{(\vre)})\right)}  
 with any translate of $\h$ has length at most 
 $$2e^{-2k\tau} \beta^{ n+2}e^{-2m\tau}\sigma \stackrel{\equ{jthwindow},\, \equ{set2}}{\le}
2\beta^{ n(j-1)}\beta^{ n+2}\delta 
{\le}
2\beta^{ n(j-1)}\beta^{ n+1}r_1 \le2 \beta^{ n}r_i
\,.$$ 
This implies that $
 \left\{ \x  \in B_i : g_\tau^k\exp(\x) y \in Z^{(\vre)} \right\}$ is
 contained in a $ \beta^{ n}r_i$-neighborhood of some hyperplane, and
 hence 
 the union of sets \equ{kthset} over all indices $k$
contained in the $j$th
window   is covered by at most $m$ $ \beta^{ n}r_i$-neighborhoods of hyperplanes.
Let us denote these hyperplanes by $L_{i,j}$ and let $\vre_{i,j} = \beta^{ n}r_i$. 
These will be Alice's choices in the $i$th move;
the above discussion ensures that they constitute a valid move for
Alice. In each of the remaining steps belonging
to the $j$th stage, Alice will choose those neighborhoods which still
intersect the ball chosen by Bob. That is, if we write the indices
belonging to the $j$th stage as $ i(j), i(j)+1, \ldots,
i(j+1)-1$, Alice's choices in stage $j$ will be 
those hyperplanes $L_{i,j}$ for which $L_{i,j}^{(\vre_{i,j})} \cap
B_\ell \neq \varnothing$, equipped with $\vre_{i, j} = \beta^{ \ell}r_{i(j)}$. 
This choice and \equ{jthsize} ensure that all of these moves are valid moves
for Alice. 

Since the $j$th stage contains at least $ n$ indices, and in
every one of his moves Bob must choose a ball intersecting at most
$1/2$ of the intervals chosen by Alice, we have that out of those 
neighborhoods, at most $2^{- n}m$ can intersect the first ball $B_{i(j+1)}$
in the first move of the $(j+1)$-th stage of the game. But
$2^{- n}m<1$  in view of \equ{define r}. Consequently, for $k$ in the
$j$th window and $\x\in B_{i(j+1)}$, we have that $g_\tau^k\exp(\x)y$ is not
in $Z^{(\vre)}$, and therefore, if $\x_{\infty}$ is  the intersection point
of all Bob's balls, it follow that  $\exp(\x_\infty)y$ belongs to the set \equ{discrete
  goodset}. The theorem is 
proved.
 \end{proof}

\ignore{
To finish the proof we need to prove the following two propositions:

 \begin{prop}\name{percentage}  For  $d\in \N$, $M > 0$ and $N\in\N$
   and $0 < \beta < 1/3$   there exists $\vre > 0$ with the following
   property. Let a subset $S$ of $\R^d$ and $\delta >0$ be given, and
   suppose that  for each $k\in \N$ and every ball $B$ in $\R^d$ of
   radius $r$  such that $$ 
 \delta M^{-(k+!)} \le r < \delta M^{-k}
 $$
 there exist at most $N$ hyperplanes $\cl_i$ such that from the family $\cl_k$. Then the set
 $$
 \R^d \ssm  \bigcup_{k=1}^\infty \bigcup_{L\in \cl} L^{(\vre M^{-k})}$$
 is  $\beta$-HAW.
 \end{prop}

and a countable family of affine hyperplanes
 $$
 \cl = \bigcup_{k=1}^\infty \cl_k
 $$
 of $\R^d$ be given, where for each $k$,  $\cl_k$ is itself a countable family of affine hyperplanes.  S
 \begin{prop}\name{checking conditions}  
Let  $Z\subset X$ be compact and $H$-transversal, and let $\tau > 0$   and
    $0 < \beta < 1/3$ be given. Then there exists $\delta > 0$, $M > 0$, $N\in\N$ and  such that for any
    $y\in X$, the set  
\eq{discrete goodset}{\big\{s\in \R :   {F^+_\tau h_sy}\cap Z^{(\vre)}
  = \varnothing\big\}} 
 is $\beta$-HAW.
 
 For  $M > 0$ and $N,d\in\N$   there exists $\beta,\vre > 0$ with the following property. Let $\delta >0$ and a countable family of affine hyperplanes
 $$
 \cl = \bigcup_{k=1}^\infty \cl_k
 $$
 of $\R^d$ be given, where for each $k$,  $\cl_k$ is itself a countable family of affine hyperplanes.  Suppose that  for each $k\in \N$ and every ball $B$ of radius $r$  in $\R^d$  with
 $$
 \delta M^{-(k+!)} \le r < \delta M^{-k}
 $$
 intersects at most $N$ hyperplanes from the family $\cl_k$. Then the set
 $$
 \R^d \ssm  \bigcup_{k=1}^\infty \bigcup_{L\in \cl} L^{(\vre M^{-k})}$$
 is  $\beta$-HAW.
 \end{prop}
}

\ignore{
In the continuous time case, another important example of a $G$-invariant distribution is the one defined by the Lie algebra $\f$ of $F = \{g_t\mid t\in\br\}$. This gives a special case of the above definition: a $C^1$ submanifold  $Z$ of $\Omega$ is 
$\f$-transversal iff the $F$-orbit of any point of $Z$ is not tangent to $Z$. We will need the following simple 


\proclaim{Lemma} Let $F = \{g_t\mid t\in\br\}$ and let $Z$ be a compact  $\f$-transversal  $C^1$ submanifold of $\Omega$. Then

{\rm (a)} there exists positive ${\ve}_1 = {\ve}_1(Z)$ such that ${g_{[-{\ve}_1,{\ve}_1]}Z}$ is a  $C^1$ manifold;

{\rm (b)} if, in addition,  $TZ\oplus \f$ is  $\h$-transversal, then
there exists positive ${\ve}_2 = {\ve}_2(Z) \le {\ve}_1(Z)$ such that
$\zt$ is  $\h$-transversal. \endproclaim 

 Note that $TZ\oplus \f$ is automatically $\h$-transversal whenever  $Z$  is 
$\f$-transversal and dim$(Z) <  k$.

\demo{Proof} \qed\enddemo

 .
Still missing. {\comment{Mention Lemma from the Broderick-Fishman-Simmons paper which we will
use to prove HAW}}

\section{Concluding remarks}

Corollaries about intersections with diffuse fractals.
Further diophantine consequences, i.e.\ on the set of values of $\lambda(q\lambda - p)$ for a \ba\ $\lambda$, also with higher-dimensional analogues.

The method of proof of Theorem \ref{ba countable} consists of
  taking the elements of $A$ one at at time, considering the set of
  forms whose values at integer points are not close to $a$, and then
  intersecting all those sets over all $a\in A$. This is made possible
  by the technique of Schmidt games, to be described in detail in a
  later section.

From the proof it is clear that Bob's strategy will also work for the image under $f_n$
of a larger set, one of full Lebesgue measure, namely
$$S' \df \{(x,y) \in \R^2 : x_i = y_i = 0 \text{ for some }i \in \N\}.$$
It is easy to see that this set is $1/9$-strong winning, and so we have the following
\begin{thm}
$S'$ is strong winning but the image under some $C^1$-diffeomorphism is not winning.
\end{thm}
}






\bibliographystyle{alpha}

\end{document}